 \newcommand{\suchthat}{\;\ifnum\currentgrouptype=16 \middle\fi|\;}
\newcommand{\R}{\mathbb{R}}
\newcommand{\Q}{\mathbb{Q}}
\newcommand{\Z}{\mathbb{Z}}
\newcommand{\C}{\mathbb{C}}
\newcommand{\rst}[1]{\ensuremath{{\mathbin\mid}\raise-.5ex\hbox{$#1$}}}
\newcommand{\lie}{\mathfrak{g}}
\newcommand{\lien}{\mathfrak{n}}
\newcommand{\liem}{\mathfrak{m}}
\DeclareMathOperator{\Gal}{Gal}
\DeclareMathOperator{\GL}{GL}
\DeclareMathOperator{\Aut}{Aut}
\DeclareMathOperator{\Aff}{Aff}
\newtheorem*{rep@theorem}{\rep@title}
\newcommand{\newreptheorem}[2]{%
\newenvironment{rep#1}[1]{%
 \def\rep@title{#2 \ref{##1}}%
 \begin{rep@theorem}}%
 {\end{rep@theorem}}}
\author{Jonas Der\'e}
\address{KU Leuven Kulak, E. Sabbelaan 53, BE-8500 Kortrijk, Belgium}
\email{jonas.dere@kuleuven-kulak.be}
\thanks{The author is supported by a Ph.D.~fellowship of the Research Foundation -- Flanders (FWO). Research supported by the research Fund of the KU Leuven}
\title{\bf Gradings on Lie algebras with applications to infra-nilmanifolds}
\date{\today}
\subjclass[2010]{Primary: 17B70; Secondary: 20F18, 22E25, 37F15}
\newtheorem{Def}{Definition}[section]
\newtheorem*{Ex}{Example}
\newtheorem{Cor}[Def]{Corollary}
\newtheorem{Thm}[Def]{Theorem}
\newtheorem{Prop}[Def]{Proposition}
\newtheorem{Lem}[Def]{Lemma}
\newtheorem*{Thm*}{Theorem}
\newtheorem*{Prop*}{Proposition}
\newtheorem*{Lem*}{Lemma}
\begin{document}

\begin{abstract}
In this paper, we study positive as well as non-negative and non-trivial gradings on finite dimensional Lie algebras. The existence of such a grading on a Lie algebra is invariant under taking field extensions, a result very recently obtained by Y. Cornulier and we give a different proof of this fact. Similarly, we prove that given a grading of one of these types and a finite group of automorphisms, there always exist a grading of the same type which is preserved by this group. From these results we conclude that the existence of an expanding map or a non-trivial self-cover on an infra-nilmanifold depends only on the covering Lie group. Another application is the construction of a nilmanifold admitting an Anosov diffeomorphisms but no non-trivial self-covers and in particular no expanding maps, which is the first known example of this type.
\end{abstract}

\maketitle

Let $E\subseteq \C$ be a subfield of the complex numbers and $\lien$ a finite dimensional Lie algebra over $E$. A grading of the Lie algebra $\lien$ is a decomposition of $\lien$ as a direct sum $$\displaystyle \lien = \bigoplus_{i \in \Z} \lien_i$$ of subspaces $\lien_i \subseteq \lien$ such that $[\lien_i,\lien_j] \subseteq \lien_{i+j}$ for all $i,j \in \Z$. We call the grading positive if $\lien_i = 0$ for all $i \leq 0$ and non-negative if $\lien_i = 0$ for all $i < 0$. Every Lie algebra has a non-negative grading given by $\lien = \lien_0$ and we call this the trivial grading of $\lien$. An automorphism $\varphi \in \Aut(\lien)$ preserves the grading if $\varphi(\lien_i) = \lien_i$ for all $i \in \Z$.

\smallskip

There is a strong relation between positive gradings and expanding automorphisms of Lie algebras. An automorphism $\varphi \in \Aut(\lien)$ is called expanding if $\vert \lambda \vert > 1$ for all eigenvalues $\lambda$ of $\varphi$. Let $\lien$ be a Lie algebra over $E$ with a positive grading $\lien = \bigoplus_{i \in \Z} \lien_i$. Given a $\mu \in E$ with $\vert \mu \vert > 1$, there exists an expanding automorphism $\varphi$ of $\lien$ which maps an element $x \in \lien_i$ to $\mu^i x$. Moreover, if $\psi$ is an automorphism of $\lien$ which preserves the grading, then $\psi$ will commute with the expanding automorphism $\varphi$. 

In \cite[Theorem 3.4]{dd14-1} it is shown that this construction also works the other way around. Given an expanding automorphism $\varphi$ we find a positive grading such that every automorphism $\psi$ commuting with $\varphi$ preserves the positive grading. The result in \cite{dd14-1} was only stated in the case where $E = \Q$ but in fact the proof works for any field $E$. 

There is a similar relation between non-trivial and non-negative gradings and partially expanding automorphisms. We call an automorphism partially expanding if for every eigenvalue $\lambda$ of $\varphi$ we have $\lambda = 1$ or $\vert \lambda \vert >1$ and there is at least one eigenvalue $\lambda \neq 1$. Therefore, the study of these two types of gradings on Lie algebras preserved by a given automorphism is equivalent to the study of (partially) expanding automorphisms which commute with this automorphism.

\smallskip

In this paper, we use several notations and results about linear algebraic groups. We refer the reader to \cite{bore91-1,hoch81-1,hump81-1} for more background about these groups. Note that the automorphism group of a Lie algebra is an example of a linear algebraic group. In the first section, we show that the existence of a positive grading on a Lie algebra is invariant under taking field extensions, see Theorem \ref{fieldalg}. The second main result, namely Theorem \ref{classexp}, shows that given a finite group of automorphisms, there always exists a grading preserved by this group. The same results also hold for non-trivial and non-negative gradings on Lie algebras. In the third section, we combine these main results about Lie algebras with \cite[Theorem 4.2.]{dd14-1} to get the following classification of infra-nilmanifolds admitting an expanding map:
\begin{reptheorem}{classexp}
Let $\Gamma \backslash G$ be an infra-nilmanifold modeled on a Lie group $G$ with corresponding Lie algebra $\lie$. Then the following are equivalent:
\begin{enumerate}[$(1)$]
\item the infra-nilmanifold $\Gamma \backslash G$ admits an expanding map;
\item the Lie algebra $\lie$ has a positive grading;
\item the Lie algebra $\lie$ has an expanding automorphism.
\end{enumerate}  
\end{reptheorem}
\noindent Again, there is a similar classification for non-trivial self-covers of $\Gamma \backslash G$, corresponding to non-trivial and non-negative gradings. As a consequence of this theorem we construct a nilmanifold admitting an Anosov diffeomorphism but no expanding map in the last section.

\smallskip

\noindent Some of these results were proved independently and by different methods in \cite{corn14-1} by Y. Cornulier. More detailed references to the work of Y. Cornulier are given throughout paper.

\section{Gradings under field extensions}

From now on, $K$ will always denote a subfield of the complex numbers $\C$. If $G \le \GL(n,\C)$ is a linear algebraic $K$-group, we denote by $G(K) = G \cap \GL(n,K)$ the subgroup of $K$-rational points in $G$. An element $x \in G$ is expanding if for all the eigenvalues $\lambda$ of $x$, we have $\vert \lambda \vert >1$. We call an automorphism partially expanding if for every eigenvalue $\lambda$ of $x$ we have $\lambda = 1$ or $\vert \lambda \vert >1$ and $x$ has at least one eigenvalue $\neq 1$. 

In this section we show that the existence of a positive grading is invariant under field extensions. First, we give a proof in the more general case of linear algebraic groups:

\begin{Thm}
\label{field}
Let $K \subseteq L \subseteq \C$ be field extensions and $G$ a linear algebraic $K$-group. Then $G(K)$ has an expanding element if and only if $G(L)$ has an expanding element.
\end{Thm}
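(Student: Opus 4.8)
The forward implication is immediate: since $K \subseteq L$ we have $G(K) = G \cap \GL(n,K) \subseteq G \cap \GL(n,L) = G(L)$, so any expanding element of $G(K)$ is already an expanding element of $G(L)$. All the work lies in the converse, and in fact the plan is to prove the stronger statement that $G(\C)$ having an expanding element already forces $G(K)$ to have one. Applying this to the $K$-group $G$ and using $G(L) \subseteq G(\C)$ then yields the theorem.

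So assume $x \in G(\C)$ is expanding. First I would reduce to a semisimple element inside the identity component. Writing the Jordan decomposition $x = x_s x_u$ (the unipotent part contributes only eigenvalues $1$, so $x_s$ has the same eigenvalues as $x$ and is again expanding), and then replacing $x_s$ by $x_s^m$ with $m = [G : G^0]$, I obtain a semisimple expanding element $z \in G^0(\C)$; here I use that powers of a semisimple expanding element are again semisimple and expanding. Thus it suffices to treat the connected case: for $H := G^0$, a connected linear algebraic $K$-group with a semisimple expanding element in $H(\C)$, produce an expanding element in $H(K) \subseteq G(K)$.

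For the connected case I would fix a maximal torus $T$ of $H$ defined over $K$, which exists by Grothendieck's theorem in characteristic $0$. Since all maximal tori of $H(\C)$ are conjugate, after conjugating $z$ by a suitable element of $H(\C)$ I may assume $z \in T(\C)$; conjugation preserves eigenvalues, so $z$ stays expanding. Let $\chi_1, \dots, \chi_n \in X^*(T)$ be the weights (with multiplicity) of the inclusion $T \hookrightarrow \GL(n,\C)$; these are exactly the characters giving the eigenvalues, so the element $u \in \operatorname{Hom}(X^*(T), \R) = X_*(T) \otimes \R$ defined by $\chi \mapsto \log|\chi(z)|$ satisfies $\langle \chi_i, u \rangle > 0$ for every $i$.

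The crucial and most delicate step is to make this positivity $K$-rational. Because the representation $T \hookrightarrow \GL(n,\C)$ is defined over $K$, the Galois group $\Gamma = \Gal(\bar K / K)$, acting through the finite splitting field of $T$, permutes the multiset $\{\chi_i\}$. Averaging $u$ over $\Gamma$ produces a Galois-fixed $\bar u \in (X_*(T) \otimes \R)^\Gamma = X_*(T_s) \otimes \R$, where $T_s \subseteq T$ is the maximal $K$-split subtorus, and since $\Gamma$ only permutes the $\chi_i$ one still has $\langle \chi_i, \bar u \rangle > 0$ for all $i$. The open cone $\{ v : \langle \chi_i, v \rangle > 0 \text{ for all } i \}$ is cut out by halfspaces with integer coefficients and is now nonempty inside $X_*(T_s) \otimes \R$, so it contains a rational point; clearing denominators yields a genuine cocharacter $\eta \in X_*(T_s)$, defined over $K$, with $\langle \chi_i, \eta \rangle > 0$ for all $i$. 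Then $t := \eta(2) \in T_s(K) \subseteq H(K)$ has eigenvalues $\chi_i(t) = 2^{\langle \chi_i, \eta \rangle}$, all of modulus $> 1$, so $t$ is the desired expanding element of $G(K)$. I expect the Galois averaging and the passage to the split subtorus to be the main obstacle, since this is exactly where rationality over $K$ is created; the remaining ingredients (Jordan decomposition, conjugacy of maximal tori, existence of a maximal $K$-torus, and rational points in an open rational cone) are standard.
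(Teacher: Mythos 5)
Your argument is correct, and although it shares its skeleton with the paper's proof (reduce to the connected case via powers, to a semisimple element via the Jordan decomposition, and then to a maximal torus defined over $K$ via conjugacy of maximal tori), the way you handle the torus is genuinely different. The paper writes the $K$-torus as an almost direct product $G_a G_d$ of an anisotropic and a $K$-split subtorus, shows that the $G_d$-component of an expanding element is still expanding by exhibiting a $K$-character ($t \mapsto \det(A_t)$) that would otherwise be nontrivial on the anisotropic part, and then finds an expanding $K$-point of the split torus by a density-plus-openness argument, with a separate ``entrywise absolute value'' trick needed when $K \subseteq \R$. You instead linearize everything in $X_*(T)\otimes\R$: the expanding element gives a point of the open cone $\{\,v : \langle\chi_i,v\rangle>0 \text{ for all } i\,\}$, averaging over $\Gal(\bar{K}/K)$ projects that point into $X_*(T_s)\otimes\R$ (this is the exact counterpart of the paper's ``the anisotropic part contributes nothing,'' since $X_*(T)^{\Gal(\bar{K}/K)}=X_*(T_s)$), and rationality of the cone then yields an integral cocharacter $\eta$ defined over $K$ with $\eta(2)$ expanding. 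Your route buys an explicit expanding element and avoids both the density argument and the case split on whether $K$ is real; the paper's route avoids the character/cocharacter formalism and stays closer to explicit matrix computations. Both are complete proofs of the statement.
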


\begin{proof}
We have a natural inclusion $G(K) \subseteq G(L)$, so if $G(K)$ has an expanding element, also $G(L)$ has an expanding element. For the other implication, it is sufficient to prove it in the case where $L = \C$. Since every power of an expanding element is again expanding, we can also assume that the group $G$ is connected. 

Let $x \in G(\C) = G$ be an expanding element. By the multiplicative Jordan decomposition, we can assume that $x$ is semisimple. Every semisimple element of $G$ lies in a maximal torus, so the existence of an expanding element is equivalent to the existence of an expanding element in a maximal torus. Since all maximal tori are conjugate and $G$ also contains a maximal torus defined over $K$, we can assume that $G$ is a $K$-torus. 

From \cite[Section 8.15]{bore91-1} it follows that every $K$-torus $G$ can be written as $G = G_a G_d$ where $G_a$ is an anisotropic subtorus and $G_d$ is a $K$-split subtorus. Since every $K$-split torus is conjugated over $\GL(n,K)$ to a $K$-closed subgroup of $D(n,\C)$, we can assume that $G_d$ is a subgroup of $D(n,\C)$. Let $x \in G$ be an expanding element and write $x = y z$ with $y \in G_a$ and $z \in G_d$. We show that $z$ is also expanding and hence that the $K$-split torus $G_d$ contains an expanding element. If not, we can write $z$ up to permutation of the eigenvalues and thus up to conjugation by an element of $\GL(n,\Q) \subseteq \GL(n,K)$ as $$z = \begin{pmatrix}Z_1 & 0 \\0 & Z_2 \end{pmatrix}$$ where $Z_1$ is a diagonal matrix with only eigenvalues $\leq 1$ in absolute value and $Z_2$ a diagonal matrix with only eigenvalues $>1$ in absolute value. Since the torus $G_a$ commutes with $z$, every element $t \in G_a$ can be written as $$t = \begin{pmatrix}A_t & 0 \\0 & B_t \end{pmatrix}$$ for some invertible matrices $A_t$ and $B_t$. Consider the character $\chi: G_a \to \C^\ast$ with $\chi(t) = \det(A_t)$. This morphism is clearly defined over $K$ and hence must be trivial. In particular, for the element $y$ we have $\det(A_y) = 1$. This would imply that $x$ is not expanding, a contradiction, and we conclude that $G_d$ contains an expanding element.

So we are left to prove the theorem in the case where $G$ is a $K$-split torus, or equivalently, in the case of a $K$-closed subgroup of $D(n,\C)$. Note that the expanding elements form an open subset of $G(\R)$ and $G(\C)$ for the Euclidean topology. The case $K \not\subseteq \R$ is then immediate since $G(K)$ forms a dense subset of $G$ for the Euclidean topology. In the case $K \subseteq \R$, we only have that $G(K)$ is a dense subset of $G(\R)$ for the Euclidean topology and therefore it suffices to show that $G(\R)$ has an expanding element. 

From \cite[Section 8.2]{bore91-1} it follows that $G$ is defined by character equations, so as the intersection of kernels of characters. Every character $D(n,\C) \to \C^\ast$ is of the form $$ \begin{pmatrix}\lambda_1 & 0 & \dots & 0 \\0 & \lambda_2 & \dots & 0 \\ \vdots & \vdots & \ddots & \vdots\\0 & 0 & \dots & \lambda_n\end{pmatrix} \mapsto \lambda_1^{k_1} \lambda_2^{k_2} \ldots \lambda_n^{k_n}$$ for some $k_i \in \Z$. This implies that if an element $$\begin{pmatrix}\lambda_1 & 0 & \dots & 0 \\0 & \lambda_2 & \dots & 0 \\ \vdots & \vdots & \ddots & \vdots\\0 & 0 & \dots & \lambda_n\end{pmatrix} \in G, $$ then also $$\begin{pmatrix} \vert \lambda_1 \vert & 0 & \dots & 0 \\0 &\vert \lambda_2 \vert & \dots & 0 \\ \vdots & \vdots & \ddots & \vdots\\0 & 0 & \dots & \vert \lambda_n \vert \end{pmatrix} \in G,$$ since the latter will also satisfy the same character equations. By applying this to an expanding element of $G$, we get an expanding element of $G(\R)$ and this finishes the proof.
\end{proof}

Let $E \subseteq \C$ be any field and $\lien$ a nilpotent Lie algebra over the field $E$. If $E \subseteq F$ is a field extension of $E$, then we can construct the Lie algebra $\lien^F = F \otimes_E \lien$ over the field $F$. The standard example will be the case where $F = \C$ and we call $\lien^\C$ the complexification of the Lie algebra $\lien$. The automorphism group $G = \Aut(\lien^\C)$ is a linear algebraic $E$-group and we have that $\Aut(\lien) = G (E)$.

By applying Theorem \ref{field} to the automorphism group of a Lie algebra, we have the following consequence:

\begin{Thm}
\label{fieldalg}
Let $E \subseteq F \subseteq \C$ be field extensions and $\lien$ a Lie algebra over the field $E$. Then $\lien$ admits an expanding automorphism if and only if $\lien^F$ admits an expanding automorphism. Equivalently, the Lie algebra $\lien$ admits a positive grading if and only if the Lie algebra $\lien^F$ admits a positive grading.
\end{Thm}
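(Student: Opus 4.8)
The plan is to deduce this statement from Theorem \ref{field} applied to the linear algebraic $E$-group $G = \Aut(\lien^\C)$, which is exactly the setup prepared in the paragraph preceding the statement. The first thing I would record are the two identifications $\Aut(\lien) = G(E)$ and $\Aut(\lien^F) = G(F)$. The first is already noted above. For the second, observe that
\[
(\lien^F)^\C = \C \otimes_F (F \otimes_E \lien) = \C \otimes_E \lien = \lien^\C,
\]
so that $\Aut((\lien^F)^\C) = \Aut(\lien^\C) = G$ as algebraic groups, and $G$ is defined over $E \subseteq F$. The substantive content of these identifications is that an automorphism of $\lien^\C$ is a linear map preserving the bracket, which is cut out by polynomial equations in the matrix entries whose coefficients are the structure constants of $\lien$ in a fixed $E$-basis; these coefficients lie in $E$. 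Consequently the $F$-rational points of $G$ are precisely the $F$-linear bracket-preserving maps of $\lien^F$, i.e. $\Aut(\lien^F)$, and likewise over $E$.

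With these identifications, the proof is nearly immediate. An element $x \in G(E)$ is expanding as a matrix (every eigenvalue of absolute value $> 1$) exactly when the corresponding automorphism of $\lien$ is expanding in the sense defined above, and the same holds over $F$. Applying Theorem \ref{field} with $K = E$ and $L = F$ gives that $G(E)$ has an expanding element if and only if $G(F)$ does, which is precisely the assertion that $\lien$ admits an expanding automorphism if and only if $\lien^F$ does.

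To obtain the final ``equivalently'' clause, I would invoke the correspondence between positive gradings and expanding automorphisms discussed before the theorem. A positive grading $\lien = \bigoplus_{i} \lien_i$ yields the expanding automorphism sending $x \in \lien_i$ to $\mu^i x$ for any $\mu \in E$ with $\vert \mu \vert > 1$; conversely, \cite[Theorem 3.4]{dd14-1}, whose proof is valid over an arbitrary subfield of $\C$, produces a positive grading from any expanding automorphism. Since this equivalence holds over both $E$ and $F$, the positive-grading statement follows at once from the expanding-automorphism statement.

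The genuine work of the theorem lives entirely in Theorem \ref{field}; within this corollary there is no real obstacle. The only step requiring care is the descent bookkeeping, namely making the identification $\Aut(\lien^F) = G(F)$ precise, since it relies on the defining equations of $G$ having coefficients in $E$ and on the compatibility of scalar extension with the automorphism-group functor. That is where I would be most attentive.
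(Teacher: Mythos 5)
Your proposal is correct and follows exactly the route the paper takes: the paper derives Theorem \ref{fieldalg} as an immediate consequence of Theorem \ref{field} applied to $G = \Aut(\lien^\C)$ with the identification $\Aut(\lien) = G(E)$, together with the grading/expanding-automorphism correspondence from \cite[Theorem 3.4]{dd14-1} noted in the introduction. Your write-up simply makes explicit the bookkeeping (in particular $\Aut(\lien^F) = G(F)$) that the paper leaves implicit.
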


\noindent At this point, we want to remark here that Yves Cornulier presented the first proof of this theorem and this over all fields of characteristic 0 in \cite[Theorem 1.4]{corn14-1}. The approach we present here was developed independently from \cite[Theorem 1.4]{corn14-1} and uses different methods. 

\smallskip

More or less the same proof also works for partially expanding elements instead of expanding elements. One difference in the case of partially expanding elements is that the set of all partially expanding elements does not form an open subset of $G(\R)$ or $G(\C)$ for the Euclidean topology. If $G$ is a $K$-closed subgroup of $D(n,\C)$ containing a partially expanding element with the first $k$ eigenvalues equal to $1$, then we restrict to the subtorus $$\tilde{G} = \left\{  \begin{pmatrix}\lambda_1 & 0 & \dots & 0 \\0 & \lambda_2 & \dots & 0 \\ \vdots & \vdots & \ddots & \vdots\\0 & 0 & \dots & \lambda_n \end{pmatrix} \in G  \mid   \lambda_1 = \ldots = \lambda_k = 1 \right\}< G$$ and the proof works for this subtorus $\tilde{G}$.

If we write a partially expanding automorphism of $\tilde{G}$ as $x = y z$ with $y \in G_a$ and $z \in G_d$ for an anisotropic torus $G_a$ and a $K$-split torus $G_d$, we can show just as in Theorem \ref{field} that $z$ has no eigenvalues of absolute value $< 1$. This implies that the $K$-split torus $G_d$ has a partially expanding element. In particular we get the following result:

\begin{Thm}
\label{fieldalg2}
Let $\lien$ be a Lie algebra over a field $E \subseteq \C$ and $E \subseteq F \subseteq \C$ a field extension. Then $\lien$ admits a partially expanding automorphism if and only if $\lien^F$ admits a partially expanding automorphism. Equivalently, $\lien$ admits a non-trivial and non-negative grading if and only if $\lien^F$ admits a non-trivial and non-negative grading.
\end{Thm}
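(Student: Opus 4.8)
The plan is to deduce everything from the partially expanding analogue of Theorem \ref{field}, exactly as Theorem \ref{fieldalg} was deduced from Theorem \ref{field}. Applying that analogue to the linear algebraic $E$-group $G = \Aut(\lien^\C)$, for which $\Aut(\lien) = G(E)$ and $\Aut(\lien^F) = G(F)$, turns the statement about automorphisms into the assertion that $G(E)$ contains a partially expanding element if and only if $G(F)$ does. The equivalence with the existence of a non-trivial and non-negative grading is the direct analogue of the correspondence between positive gradings and expanding automorphisms recalled in the introduction, so I would dispose of it in the same way and focus on partially expanding automorphisms. As before, one direction is immediate from $G(E) \subseteq G(F)$, and it suffices to prove that if $G(\C)$ has a partially expanding element then so does $G(E)$, i.e.\ to treat $F = \C$.

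For the converse I would rerun the chain of reductions from the proof of Theorem \ref{field}, checking that each survives the replacement of expanding elements by partially expanding ones. Powers of a partially expanding element are again partially expanding, since an eigenvalue equal to $1$ stays $1$, an eigenvalue with $\vert \lambda \vert > 1$ has $\vert \lambda^m \vert > 1$ and in particular $\lambda^m \neq 1$, so the presence of an eigenvalue $\neq 1$ is preserved; this reduces to $G$ connected. The multiplicative Jordan decomposition preserves eigenvalues, so I may take the element semisimple, hence contained in a maximal torus, reducing to $G$ a $K$-torus. Writing $G = G_a G_d$ with $G_a$ anisotropic and $G_d$ a $K$-split torus and splitting a partially expanding $x = yz$ accordingly, the same block-and-character argument as in Theorem \ref{field}, applied now to the block of eigenvalues of $z$ of absolute value $< 1$, shows that $z$ has no such eigenvalue; moreover the restriction of $\det$ to $G_a$ is a $K$-character and hence trivial, so the product of the absolute values of the eigenvalues of $z$ equals $\vert \det(x) \vert$, which exceeds $1$ because $x$ is partially expanding and non-trivial, and therefore $z$ has at least one eigenvalue of absolute value $> 1$. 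Taking coordinatewise absolute values then produces a partially expanding element of the $K$-split torus $G_d$, and it remains to treat the case of a $K$-closed subgroup of $D(n,\C)$.

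The one genuinely new point is that the partially expanding elements no longer form an open set, so the density argument of Theorem \ref{field} cannot be applied directly. Here I would use the restriction sketched before the statement: if $x \in G \subseteq D(n,\C)$ is partially expanding with exactly its first $k$ diagonal entries equal to $1$, I pass to the $K$-closed subtorus $\tilde{G}$ cut out inside $G$ by the equations $\lambda_1 = \dots = \lambda_k = 1$. On $\tilde{G}$ these coordinates are frozen at $1$, so the condition that every remaining coordinate has absolute value $> 1$ is open and is sufficient for partial expansion, and it is met by $x$ itself. The proof of Theorem \ref{field} now applies verbatim to $\tilde{G}$: when $K \not\subseteq \R$ the dense subset $\tilde{G}(K)$ meets this nonempty open set, and when $K \subseteq \R$ the coordinatewise absolute value of $x$ lies in $\tilde{G}(\R)$ and is partially expanding, after which density of $\tilde{G}(K)$ in $\tilde{G}(\R)$ yields the required element of $\tilde{G}(K) \subseteq G(K)$.

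I expect the main obstacle to be precisely this loss of openness, together with the need to ensure that the element produced on the split torus genuinely retains an eigenvalue $\neq 1$ rather than degenerating to the identity. Both are handled by the passage to $\tilde{G}$, which converts partial expansion into honest expansion in the unfrozen coordinates and thereby lets the remainder of the argument of Theorem \ref{field} be reused without change.
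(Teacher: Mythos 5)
Your proposal is correct and follows essentially the same route as the paper: reduce to the algebraic-group statement for $G=\Aut(\lien^\C)$, rerun the reductions of Theorem \ref{field}, use the block-and-character argument to show the split part $z$ has no eigenvalue of absolute value $<1$, and handle the loss of openness by passing to the frozen subtorus $\tilde{G}$. Your determinant observation guaranteeing that $z$ retains an eigenvalue of absolute value $>1$ is a point the paper leaves implicit, but it is the natural way to complete the step and does not constitute a different approach.
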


%

\section{Gradings preserved by automorphisms}

In this section, we study gradings preserved by automorphisms which lie in a reductive subgroup of the automorphism group. Note that all finite subgroups and diagonalizable groups of automorphisms satisfy this property, since they only contain semisimple elements. Again, we first consider the more general case of linear algebraic groups:

\begin{Thm}
\label{holonomy}
Let $G$ be a linear algebraic $K$-group with an expanding element and $H \subseteq G(K)$ a subgroup contained in a reductive subgroup of $G$. Then there exists an expanding element of $G(K)$ which commutes with every element of $H$.
\end{Thm}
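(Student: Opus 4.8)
The plan is to realize the desired element as a rational point of the centralizer $C := Z_G(H)$ and to reduce the whole problem, over $\C$, to a convexity/averaging argument inside a reductive group. Since $H \subseteq G(K)$, the centralizer $C = Z_G(H)$ is again a linear algebraic $K$-group, so by Theorem \ref{field} applied to $C$ (with $L = \C$) it suffices to exhibit an expanding element of $C(\C) = Z_G(H)$; the resulting expanding element of $C(K) = Z_G(H)(K) \subseteq G(K)$ then automatically commutes with every element of $H$. From now on I work over $\C$, where I am free to conjugate by arbitrary elements of $\GL(n,\C)$, since this preserves both the eigenvalues (hence the expanding condition) and all centralizer relations.

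Next I would replace $H$ by a larger but more convenient group. As $H$ is contained in a reductive subgroup $R$ and $Z_G(R) \subseteq Z_G(H)$, it is enough to find an expanding element in $Z_G(R)$. By Mostow's theorem every reductive subgroup lies in a Levi subgroup, so after conjugation I may assume $R \subseteq L$ for a fixed Levi subgroup $L$ of $G$. Because an expanding element has an expanding semisimple part, and every semisimple element lies in a maximal torus, while $L$ contains a maximal torus of $G$ and all maximal tori are conjugate, the reductive group $L$ again contains an expanding element. Since $Z_L(R) \subseteq Z_G(R)$, I may therefore assume from now on that $G$ itself is reductive.

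The heart of the argument is then a Cartan-decomposition/averaging step. Let $U$ be a maximal compact subgroup of $R$; it is Zariski dense in $R$, so $Z_G(R) = Z_G(U)$. Averaging the standard Hermitian form over a maximal compact subgroup of $G$ containing $U$ and conjugating accordingly, I may assume that $U$ consists of unitary matrices and that $G$ is self-adjoint, so that the Cartan involution $\theta(g) = (g^\ast)^{-1}$ stabilizes $G$ and $\mathfrak{g} = \operatorname{Lie}(G)$ decomposes as $\mathfrak{g} = \mathfrak{k} \oplus \mathfrak{p}$, with $\mathfrak{p}$ the space of Hermitian matrices in $\mathfrak{g}$. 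Take an expanding element of $G$; its semisimple part has a canonical hyperbolic part $h \in \exp(\mathfrak{p})$, and since the eigenvalues of $h$ are exactly the absolute values of the eigenvalues of the expanding element, $h$ is again expanding. Thus $H_0 := \log h \in \mathfrak{p}$ is a positive-definite Hermitian matrix.

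Finally I would average $H_0$ over the compact group: set $\bar{H} = \int_{U} \operatorname{Ad}(u) H_0 \, du$. For unitary $u$ the matrix $\operatorname{Ad}(u) H_0 = u H_0 u^\ast$ is again positive-definite Hermitian, and the cone of positive-definite Hermitian matrices is convex, so $\bar H$ is positive definite; by construction $\bar H \in \mathfrak{p}^{U} = \mathfrak{p} \cap \operatorname{Lie}(Z_G(U))$. Hence $\exp(\bar H)$ is a positive-definite, and therefore expanding, element of $Z_G(U) = Z_G(R) \subseteq Z_G(H)$, which is exactly what is needed before descending to $K$ via Theorem \ref{field}. The main obstacle is this last block of structure theory: arranging $G$ to be self-adjoint and reductive so that the expanding condition becomes positive-definiteness of a Hermitian matrix ($H_0 \in \mathfrak{p}$), after which the convexity of the positive cone makes the averaging over the compact group $U$ produce a fixed---and still positive-definite---direction lying in the centralizer.
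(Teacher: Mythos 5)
Your proof is correct, and it follows the paper's two opening reductions---passing to the $K$-closed centralizer $Z_G(H)$ so that Theorem \ref{field} reduces the whole problem to $\C$, and invoking Mostow's theorem to place the reductive subgroup inside a Levi factor $L$ that still contains an expanding element---but from there you take a genuinely different route. The paper stays entirely in the algebraic category: it writes $L^0 = Z(L)^0\cdot \mathcal{D}L^0$, shows by a determinant-character argument (the character $t \mapsto \det(A_t)$ is trivial on the commutator subgroup) that the central factor $z$ of an expanding element $x = zy$ is itself expanding, and then forces commutation with all of $L$ by taking the finite product $\prod_i l_i z l_i^{-1}$ over coset representatives of $L^0$ in $L$. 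You instead pass to a self-adjoint model of the reductive group, extract the hyperbolic part of the expanding element as $\exp(H_0)$ with $H_0 \in \mathfrak{p}$ positive definite, and average $H_0$ against Haar measure on a maximal compact subgroup $U$ of the reductive subgroup, using convexity of the positive-definite cone and Zariski density of $U$. Both succeed; the paper's version is purely algebraic and lands in the connected centre of the Levi factor (so the element commutes with all of $L$, not just $H$), while yours trades the character computation for Mostow's self-adjointness theorem and the complete multiplicative Jordan decomposition, with the convexity step making it transparent why the averaged element is still expanding---it is the continuous analogue of the paper's finite product of commuting expanding elements. Two minor points to tighten: the hyperbolic part is only $G$-conjugate into $\exp(\mathfrak{p})$ rather than canonically contained in it (harmless, since conjugation preserves eigenvalues), and your appeal to maximal tori to see that $L$ inherits an expanding element needs the same adjustment as the paper's own phrasing when $G$ is not connected (a semisimple element generates a diagonalizable, hence reductive, subgroup, which Mostow places inside a maximal reductive subgroup).
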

\begin{proof}
The elements of $G$ which commute with every element of $H$ form a $K$-closed subgroup of $G$, so because of Theorem \ref{field} it suffices to show that $G$ contains an expanding element which commutes with every element of $H$. 

Since $H$ is a subgroup of a reductive subgroup of $G$, it follows from  From \cite[Chapter VIII]{hoch81-1} that there exists a Levi factor $L$ of $G$ such that $H \subseteq L$. Every expanding semisimple element of $G$ also lies in a Levi factor. Because all Levi factors of $G$ are conjugate, we know that if the group $G$ has an expanding element, the Levi factor $L$ (and therefore also $L^0$) has an expanding element. 

The group $L^0$ is by definition reductive and thus $L^0$ can be written as $$L^0 = Z \cdot \mathcal{D} L^0$$ where $Z = Z(L)^0$ is the identity component of the center of $L$ and $\mathfrak{D} L^0 = [L^0, L^0]$ is the commutator subgroup, see \cite[Section 14.2]{bore91-1}. Take $x$ an expanding element of $L^0$, then we can write it as $x = z y$ with $y \in \mathcal{D} L^0$ and $z \in Z$. Just as above in Theorem \ref{field}, we claim that $z$ is an expanding element.  If $z$ is not expanding, we can write $z$ up to conjugation in $\GL(n,\C)$ as $$z = \begin{pmatrix}Z_1 & 0 \\0 & Z_2 \end{pmatrix}$$ where $Z_1$ is an invertible matrix with only eigenvalues $\leq 1$ in absolute value and $Z_2$ an invertible matrix with only eigenvalues $>1$ in absolute value. Since $z$ lies in the center of $L^0$, every element $t \in L^0$ is of the form $$t = \begin{pmatrix}A_t & 0 \\0 & B_t \end{pmatrix}$$ for some invertible matrices $A_t$ and $B_t$. Consider now the character $\chi: L^0 \to \C^\ast$ given by $\chi( t ) = \det(A_t)$. Since $y \in \mathcal{D} L^0$, we have $\chi(y) = 1$ and this is a contradiction since $x = z y $ is expanding. We deduce that the connected center of $L^0$ contains an expanding element. 

The group $L^0$ has finite index in $L$, hence there exist elements $l_1, \ldots, l_k \in L$ such that $L = l_1 L^0 \sqcup \ldots \sqcup l_k L^0$. The subgroup $Z$ is a characteristic subgroup in $L$, so we have $l_i Z l_i^{-1} = Z$ for all $1 \leq i \leq k$. Fix an expanding element $z \in Z$, then the element $$z_0 = \prod_{1 \leq i \leq k} l_i z l_i^{-1} \in Z$$ commutes with every element $l_i$ with $1 \leq i \leq k$ and therefore also with every element of $L$. The group $H$ is a subgroup of $L$ and thus $z_0$ commutes with every element of $H$. Since all the elements $l_i z l_i^{-1} \in Z$ are expanding and commute, it follows that $z_0$ is expanding. This finishes the proof. \end{proof}

By applying this theorem to the linear algebraic group $\Aut(\lien^\C)$ of a rational Lie algebra $\lien$ and taking $H$ a finite subgroup, we get:

\begin{Thm}
\label{holonomyalg}
Let $\lien$ be a rational Lie algebra which admits an expanding automorphism and $H \subseteq \Aut(\lien)$ a finite subgroup. Then there exists an expanding automorphism of $\lien$ which commutes with every element of $H$. Equivalently, if $\lien$ admits a positive grading, it also admits a positive grading which is preserved by $H$.
\end{Thm}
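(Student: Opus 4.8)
The plan is to obtain this statement as a direct corollary of Theorem \ref{holonomy}, applied to the linear algebraic $\Q$-group $G = \Aut(\lien^\C)$, for which $\Aut(\lien) = G(\Q)$ as recalled just above. The hypothesis that $\lien$ admits an expanding automorphism says precisely that $G(\Q)$ contains an expanding element, so in particular $G$ has one. The whole argument is then a matter of verifying the remaining hypothesis of Theorem \ref{holonomy} and translating its conclusion back into the language of gradings.

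The one condition still to check is that the finite subgroup $H \subseteq \Aut(\lien) = G(\Q)$ is contained in a reductive subgroup of $G$. Here I would argue that $H$ is itself such a subgroup: being finite, $H$ is a Zariski closed subset and hence an algebraic subgroup of $G$ whose identity component $H^0$ is trivial; its unipotent radical is therefore trivial, so $H$ is reductive. (This is the instance of the observation made at the start of this section that finite groups consist only of semisimple elements in characteristic $0$.) With this in hand, Theorem \ref{holonomy}, taken with $K = \Q$, yields an expanding element $\varphi \in G(\Q) = \Aut(\lien)$ that commutes with every element of $H$, which is the first assertion.

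For the equivalent formulation in terms of gradings I would pass through the dictionary recalled in the introduction. By \cite[Theorem 3.4]{dd14-1} the expanding automorphism $\varphi$ produced above determines a positive grading $\lien = \bigoplus_{i \in \Z} \lien_i$ preserved by every automorphism commuting with $\varphi$; since each element of $H$ commutes with $\varphi$, this grading is preserved by $H$. The reverse passage, from a positive grading preserved by $H$ back to an expanding automorphism commuting with $H$, is furnished by the construction $x \in \lien_i \mapsto \mu^i x$ for any fixed $\mu \in \Q$ with $\vert \mu \vert > 1$, so the two formulations are genuinely equivalent and establishing either one suffices.

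Since essentially all of the work is concentrated in Theorem \ref{holonomy}, which I may assume, I do not expect a serious obstacle. The only points demanding care are the bookkeeping of the rationality field $K = \Q$, matching the standing assumption that $\lien$ is rational, and the verification that a finite subgroup is reductive so that Theorem \ref{holonomy} genuinely applies; both are routine.
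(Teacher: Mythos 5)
Your proposal is correct and follows exactly the route the paper takes: the paper states Theorem \ref{holonomyalg} as an immediate application of Theorem \ref{holonomy} to $G = \Aut(\lien^\C)$ with $K=\Q$, relying on the remark at the start of that section that finite subgroups are contained in reductive subgroups, and on the dictionary from \cite[Theorem 3.4]{dd14-1} between expanding automorphisms and positive gradings. Your extra care in justifying why a finite subgroup is reductive and in spelling out both directions of the grading translation only makes explicit what the paper leaves implicit.
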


\noindent The same result holds for partially expanding maps with the same proof:

\begin{Thm}
\label{holonomyalg2}
Let $\lien$ be a rational Lie algebra which admits a partially expanding automorphism and $H \subseteq \Aut(\lien)$ a finite subgroup. Then there exists a partially expanding automorphism of $\lien$ which commutes with every element of $H$. Equivalently, if $\lien$ admits a non-negative and non-trivial grading, it also admits a non-negative and non-trivial grading which is preserved by $H$.
\end{Thm}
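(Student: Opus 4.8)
The plan is to obtain Theorem \ref{holonomyalg2} from a partially expanding analogue of Theorem \ref{holonomy}, in the same way that Theorem \ref{holonomyalg} follows from Theorem \ref{holonomy}. So I would first prove the group-theoretic statement: if $G$ is a linear algebraic $K$-group containing a partially expanding element and $H \subseteq G(K)$ is a subgroup lying in a reductive subgroup of $G$, then $G(K)$ contains a partially expanding element commuting with every element of $H$. Since a finite subgroup $H \subseteq \Aut(\lien) = G(\Q)$ of $G = \Aut(\lien^\C)$ consists of semisimple elements and therefore lies in a reductive subgroup, applying this statement to $G$ and $H$ gives the desired partially expanding automorphism, and the reformulation in terms of non-negative and non-trivial gradings is the one recalled in the introduction.

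For the group-theoretic statement I would imitate the proof of Theorem \ref{holonomy} line by line. The subgroup of elements of $G$ commuting with $H$ is $K$-closed, so by the partially expanding version of Theorem \ref{field} (obtained through the modification described before Theorem \ref{fieldalg2}) it suffices to produce a partially expanding element of $G$ commuting with $H$ over $\C$. By \cite[Chapter VIII]{hoch81-1} there is a Levi factor $L$ with $H \subseteq L$, and as all Levi factors are conjugate while a partially expanding semisimple element lies in some maximal torus and hence in some Levi factor, both $L$ and $L^0$ contain a partially expanding element (using that a power of a partially expanding element is again partially expanding to pass to the finite-index subgroup $L^0$). Writing $L^0 = Z \cdot \mathcal{D}L^0$ with $Z = Z(L)^0$ and a partially expanding $x = zy$, $z \in Z$, $y \in \mathcal{D}L^0$, the determinant-character argument of Theorem \ref{holonomy} shows that $z$ has no eigenvalue of absolute value $< 1$.

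The main obstacle is that, unlike in Theorem \ref{holonomy}, the property of being partially expanding is not open, so knowing that $z$ has no eigenvalue of absolute value $< 1$ is strictly weaker than being partially expanding; this is the same difficulty that forced the subtorus $\tilde{G}$ in the discussion before Theorem \ref{fieldalg2}. I would resolve it in two steps. First, because $y$ lies in the semisimple derived group $\mathcal{D}L^0$, every character is trivial on $y$ and so the product of its eigenvalues is $1$; combined with the absence of eigenvalues of absolute value $< 1$ for $z$, this forces $z$ to have at least one eigenvalue of absolute value $> 1$, since otherwise all eigenvalues of $x = zy$ would have absolute value $1$, contradicting partial expansion. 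Second, as $Z$ is a torus it can be conjugated into $D(n,\C)$ and is defined by character equations, so the absolute-value trick from the proof of Theorem \ref{field} produces an element $|z| \in Z$ whose eigenvalues are the absolute values of those of $z$; these are real, positive, all $\geq 1$ with at least one $> 1$, and every eigenvalue of absolute value $1$ has become exactly $1$, so $|z|$ is genuinely partially expanding. Finally, averaging over coset representatives of $L^0$ in $L$ exactly as in Theorem \ref{holonomy}, using that $Z$ is characteristic in $L$, yields an element $z_0 \in Z$ commuting with all of $L$ and hence with $H$; since the conjugates of $|z|$ entering this product all have real positive eigenvalues, commute, and each has an eigenvalue $> 1$, the product $z_0$ is again partially expanding, which completes the argument.
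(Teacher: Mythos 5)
Your proposal follows exactly the paper's route: the paper proves Theorem \ref{holonomyalg2} simply by asserting that the proof of Theorem \ref{holonomy} (Levi factor containing $H$, extraction of the central part $z$ via the determinant character, averaging over coset representatives of $L^0$ in $L$) works verbatim for partially expanding elements, and you carry out precisely that adaptation while supplying the details the paper leaves implicit, namely the determinant argument forcing $z$ to have an eigenvalue of absolute value $>1$ and the replacement of $z$ by $\vert z \vert$ to repair the non-openness of the partially expanding condition. One small imprecision: if all eigenvalues of $z$ had absolute value $1$ you should conclude $\vert \det x \vert = \vert \det z\vert \cdot \vert \det y \vert = 1$ rather than that all eigenvalues of $x$ have absolute value $1$ (the eigenvalues of $y$ need not lie on the unit circle), but this still contradicts $x$ being partially expanding, so your argument stands.
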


\noindent These results were proved independently and by different methods in \cite[Corollary 3.26]{corn14-1}. 

\section{Applications to infra-nilmanifolds}

In this section we apply the main results to expanding maps and non-trivial self-covers on infra-nilmanifolds. First we fix some notation about infra-nilmanifolds, more background can be found in \cite{deki96-1,sega83-1}.

Let $G$ be a connected and simply connected nilpotent Lie group and $\Aut(G)$ the group of continuous automorphisms of $G$. Define the affine group $\Aff(G)$ as the semi-direct product $G \rtimes \Aut(G)$ which acts on $G$ in a natural way. An almost-Bieberbach group $\Gamma \subseteq G \rtimes H$, where $H$ is any finite subgroup of $\Aut(G)$, is a discrete, torsion-free subgroup such that the quotient $\Gamma \backslash G$ is compact. We will always assume that $H$ is the projection of $\Gamma$ on its second component and we call $H$ the holonomy group of $\Gamma$. The quotient space $\Gamma \backslash G$ is a closed manifold and we call $\Gamma \backslash G$ an infra-nilmanifold modeled on the Lie group $G$. If the holonomy group is trivial, we call $\Gamma \backslash G$ a nilmanifold. Every almost-Bieberbach group gives rise to a rational Mal'cev completion $N^\Q$ which corresponds to a rational Lie algebra $\lien$, equiped with a natural representation $H \to \Aut(N^\Q)$. 

Let $\alpha  \in \Aff(G)$ be an affine transformation satisfying $\alpha \Gamma \alpha^{-1} \subseteq \Gamma$. Then $\alpha$ induces a differentiable map $\bar{\alpha}$ on the infra-nilmanifold $\Gamma \backslash G$, given by $$\bar{\alpha}(\Gamma g) = \Gamma \hspace{0.5mm}{}^\alpha g.$$ The induced map $\bar{\alpha}$ is called an affine infra-nilmanifold endomorphism. The eigenvalues of $\alpha$ are defined as the eigenvalues of the linear part of $\alpha$, where eigenvalues of an automorphism of $G$ are the eigenvalues of the corresponding Lie algebra automorphism on the Lie algebra of $G$. If $\alpha \Gamma \alpha^{-1} = \Gamma$, then the map $\bar{\alpha}$ will be a diffeomorphism and we call $\bar{\alpha}$ an affine infra-nilmanifold automorphism. We call $\bar{\alpha}$ expanding if $\alpha$ only has eigenvalues $>1$ in absolute value and hyperbolic if it has no eigenvalue of absolute value $1$.

\smallskip

An expanding map $f: M \to M$ on a closed Riemannian manifold $M$ is a differentiable map such that there exists constants $c > 0$ and $\lambda > 1$ with $\Vert D f^n (v) \Vert \geq c \lambda^n \Vert v \Vert$ for all $v \in TM$ and all $n \geq 1$. By a result of Gromov, see \cite{grom81-1}, we know that every expanding map is topologically conjugate to an expanding affine infra-nilmanifold endomorphism. So up to homeomorphism, the infra-nilmanifolds are the only manifolds admitting an expanding map. 

Anosov diffeomorphisms are defined in a similar way, see \cite{smal67-1} and every hyperbolic affine infra-nilmanifold automorphism is an Anosov diffeomorphism. It follows from \cite[Corollary 3.5.]{deki99-1} that the existence of an Anosov diffeomorphism on a nilmanifold $N \backslash G$ depends only on the rational Mal'cev completion $N^\Q$. If $N\backslash G$ admits an Anosov diffeomorphism, we call the Lie algebra $\lien$ corresponding to $N^\Q$ Anosov. 

\smallskip

In \cite{dd14-1} is is showed that the existence of an expanding map depends only on the rational Mal'cev completion $N^\Q$ and the representation $H \to \Aut(N^\Q)$. By combining Theorem \ref{fieldalg} and Theorem \ref{holonomyalg}, we have a complete algebraic description of the infra-nilmanifolds admitting an expanding map:

\begin{Thm}
\label{classexp}
Let $\Gamma \backslash G$ be an infra-nilmanifold modeled on a Lie group $G$ with corresponding Lie algebra $\lie$. Then the following are equivalent:
\begin{enumerate}[$(1)$]
\item the infra-nilmanifold $\Gamma \backslash G$ admits an expanding map;
\item the Lie algebra $\lie$ has a positive grading;
\item the Lie algebra $\lie$ has an expanding automorphism.
\end{enumerate} 
\end{Thm}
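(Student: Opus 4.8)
The plan is to establish the equivalence $(1) \Leftrightarrow (2) \Leftrightarrow (3)$ by combining the two main theorems of the paper with the results already cited from \cite{dd14-1}. The equivalence $(2) \Leftrightarrow (3)$ is essentially local to the Lie algebra and was already discussed in the introduction: a positive grading $\lien = \bigoplus_i \lien_i$ yields an expanding automorphism sending $x \in \lien_i$ to $\mu^i x$ for fixed $\vert \mu \vert > 1$, and conversely \cite[Theorem 3.4]{dd14-1} produces a positive grading from any expanding automorphism. So the real content is linking these purely algebraic conditions on $\lie$ to the geometric existence of an expanding map on $\Gamma \backslash G$, and here the subtlety is the holonomy group $H$.

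First I would recall the setup: the almost-Bieberbach group $\Gamma$ has holonomy group $H$, a finite subgroup of $\Aut(G)$, and the rational Mal'cev completion gives a rational Lie algebra $\lien$ with a representation $H \to \Aut(N^\Q)$. By \cite{dd14-1}, the existence of an expanding map on $\Gamma \backslash G$ depends only on $N^\Q$ together with this $H$-action; more precisely, \cite[Theorem 4.2]{dd14-1} characterizes when $\Gamma \backslash G$ admits an expanding map in terms of an expanding automorphism of $\lien$ that is compatible with (commutes with) the holonomy representation of $H$. So statement $(1)$ is equivalent to: $\lien$ admits an expanding automorphism commuting with every element of the finite subgroup $H \subseteq \Aut(\lien)$.

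The heart of the argument is then to drop the compatibility-with-$H$ requirement. The direction $(1) \Rightarrow (3)$ is immediate, since an $H$-compatible expanding automorphism is in particular an expanding automorphism. For the converse $(3) \Rightarrow (1)$, suppose $\lien$ merely admits \emph{some} expanding automorphism; note that the Lie algebra $\lie$ of $G$ in the theorem is the real Lie algebra, so I would first invoke Theorem \ref{fieldalg} to pass between $\lien$ (rational) and $\lie = \lien^\R$ and conclude that having a positive grading, equivalently an expanding automorphism, is insensitive to this field extension. Then the finite group $H$ consists of semisimple elements and hence lies in a reductive subgroup of $\Aut(\lien^\C)$, so Theorem \ref{holonomyalg} applies and upgrades the bare expanding automorphism to one commuting with all of $H$. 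Feeding this $H$-compatible expanding automorphism back through \cite[Theorem 4.2]{dd14-1} yields the expanding map on $\Gamma \backslash G$, closing the loop.

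I expect the main obstacle to be bookkeeping rather than a deep difficulty: the statement is phrased for the real Lie group $G$ and its real Lie algebra $\lie$, whereas Theorems \ref{holonomyalg} and \ref{fieldalg} are stated for the rational Mal'cev algebra $\lien$ and its complexification. The care lies in checking that ``positive grading'' and ``expanding automorphism'' transfer cleanly across the chain $\lien \subseteq \lien^\R = \lie \subseteq \lien^\C$, which is exactly what Theorem \ref{fieldalg} guarantees, so that the $H$-equivariant enhancement from Theorem \ref{holonomyalg}, carried out over $\lien^\C$ and descended to $\lien$, is legitimate input for \cite[Theorem 4.2]{dd14-1}. Once the field-extension invariance is used to identify the three conditions on $\lie$ with the corresponding conditions on $\lien$, the equivalence follows formally.
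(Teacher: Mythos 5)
Your proposal is correct and follows essentially the same route as the paper, which gives no separate proof but simply states that the theorem follows "by combining Theorem \ref{fieldalg} and Theorem \ref{holonomyalg}" with \cite[Theorem 4.2]{dd14-1}. You have filled in exactly the intended bookkeeping: \cite[Theorem 4.2]{dd14-1} reduces $(1)$ to an $H$-equivariant expanding automorphism of the rational Mal'cev algebra $\lien$, Theorem \ref{fieldalg} transfers the conditions between $\lien$ and $\lie = \lien^\R$, and Theorem \ref{holonomyalg} upgrades a bare expanding automorphism to an $H$-equivariant one.
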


As a corollary, we see that the existence of an expanding map depends only on the covering Lie group $G$:

\begin{Cor}
Let $M_1$ and $M_2$ be two infra-nilmanifolds modeled on the same Lie group. Then $M_1$ admits an expanding map if and only if $M_2$ admits an expanding map.
\end{Cor}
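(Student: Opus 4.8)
The plan is to read this off directly from Theorem \ref{classexp}, whose entire purpose is to show that the existence of an expanding map on an infra-nilmanifold is governed by a purely Lie-algebraic invariant of the modeling group, with no reference to the particular almost-Bieberbach group or its holonomy. First I would fix notation: write $M_1 = \Gamma_1 \backslash G$ and $M_2 = \Gamma_2 \backslash G$, where $G$ is the common connected, simply connected nilpotent Lie group on which both are modeled, and let $\lie$ denote the Lie algebra of $G$. The key observation is that being modeled on the same Lie group $G$ means precisely that $M_1$ and $M_2$ have one and the same corresponding Lie algebra $\lie$; the two almost-Bieberbach groups $\Gamma_1, \Gamma_2$ and their holonomy groups may well differ, but the Lie algebra attached to each manifold is that of $G$.

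Next I would apply Theorem \ref{classexp} to each manifold separately. Applied to $M_1$, the equivalence of $(1)$ and $(2)$ gives that $M_1$ admits an expanding map if and only if $\lie$ has a positive grading. Applied to $M_2$, it gives that $M_2$ admits an expanding map if and only if the same Lie algebra $\lie$ has a positive grading. Chaining these two equivalences through their common middle condition yields that $M_1$ admits an expanding map if and only if $M_2$ does, which is exactly the claim.

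The only point that genuinely requires care has already been settled inside Theorem \ref{classexp}, so I do not expect a real obstacle here. A priori the existence of an expanding map depends on the representation $H \to \Aut(N^\Q)$ as well as on the Mal'cev completion, so one might expect $M_1$ and $M_2$ to differ in behaviour when their holonomy groups differ. This dependence is removed precisely because Theorem \ref{holonomyalg} guarantees that as soon as $\lie$ carries some positive grading it carries one preserved by any prescribed finite subgroup of $\Aut(\lie)$, while Theorem \ref{fieldalg} makes the condition insensitive to the rational-versus-real choice of Lie algebra. Together these collapse all lattice- and holonomy-dependence into the single intrinsic condition that $\lie$ admits a positive grading. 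Since this condition is attached to $G$ alone, the corollary follows at once.
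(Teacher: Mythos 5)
Your proposal is correct and matches the paper's (implicit) argument: the paper states this corollary without proof as an immediate consequence of Theorem \ref{classexp}, precisely because conditions $(2)$ and $(3)$ there refer only to the Lie algebra $\lie$ of the covering group $G$ and hence are shared by any two infra-nilmanifolds modeled on $G$. Your additional remarks about how Theorems \ref{fieldalg} and \ref{holonomyalg} eliminate the a priori dependence on the lattice and holonomy correctly locate where the real work was done.
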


This is very different from the situation for Anosov diffeomorphisms, where the existence depends on the rational form $\lien$ and not only on the covering Lie group $G$. 
\begin{Ex}
Let $R$ be any commutative ring with unity and consider the Heisenberg group $$H_3(R) = \left\{ \begin{pmatrix} 1 & x & z \\0 & 1 & y\\0 & 0 & 1\end{pmatrix} \suchthat x, y, z \in R \right\}$$ over the ring $R$. Take the lattice $\Gamma = H_3(\Z) \oplus H_3(\Z)$ in the Lie group $G = H_3(\R) \oplus H_3(\R)$. Then the nilmanifold $\Gamma \backslash G$ does not admit an Anosov diffeomorphism, but there are nilmanifolds modeled on $G$ which do admit an Anosov diffeomorphism, see \cite{malf97-3}.
\end{Ex}

Similarly as for expanding maps, we can combine Theorem \ref{fieldalg2} and Theorem \ref{holonomyalg2} with the result \cite[Theorem 5.4.]{dd14-1} to find a complete algebraic characterization of the infra-nilmanifolds admitting a non-trivial self-cover, i.e. a self-cover which is not a homeomorphism:

\begin{Thm}
\label{classcoh}
Let $\Gamma \backslash G$ be an infra-nilmanifold modeled on a Lie group $G$ with corresponding Lie algebra $\lie$. Then the following are equivalent:
\begin{enumerate}
\item $\Gamma \backslash G$ admits a non-trivial self-cover;
\item $\lie$ has a non-negative and non-trivial grading;
\item $\lie$ has a partially expanding automorphism.
\end{enumerate} 
\end{Thm}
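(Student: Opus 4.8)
The plan is to follow the same three-step scheme that underlies Theorem \ref{classexp}, replacing everywhere ``expanding'' by ``partially expanding'' and ``positive grading'' by ``non-trivial and non-negative grading,'' and drawing on Theorem \ref{fieldalg2}, Theorem \ref{holonomyalg2} and \cite[Theorem 5.4.]{dd14-1} in place of their expanding counterparts. I would first dispatch the equivalence $(2) \Leftrightarrow (3)$, which is purely a statement about the Lie algebra $\lie$ and does not involve the manifold at all. This is the partially expanding analogue of the correspondence recalled in the introduction: given a non-trivial and non-negative grading $\lie = \bigoplus_{i \geq 0} \lie_i$ and a real number $\mu > 1$, the automorphism sending $x \in \lie_i$ to $\mu^i x$ has eigenvalues $\mu^i$, all equal to $1$ (for $i = 0$) or of absolute value $> 1$ (for $i > 0$), with at least one eigenvalue $\neq 1$ precisely because the grading is non-trivial; hence it is partially expanding. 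Conversely, a partially expanding automorphism yields such a grading by the partially expanding version of \cite[Theorem 3.4]{dd14-1}, whose proof works verbatim over any subfield of $\C$.

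The substantial content is the equivalence of $(1)$ with $(3)$, and here I would proceed in three moves. The starting point is \cite[Theorem 5.4.]{dd14-1}, which reduces the existence of a non-trivial self-cover of $\Gamma \backslash G$ to an algebraic condition on the rational Mal'cev completion $N^\Q$ together with its holonomy representation $H \to \Aut(N^\Q)$: namely, $\Gamma \backslash G$ admits a non-trivial self-cover if and only if the rational Lie algebra $\lien$ carries a partially expanding automorphism commuting with the (finite) holonomy group $H$ acting by automorphisms. Next I would remove the commutation constraint with $H$. Since $H$ is a finite subgroup of $\Aut(\lien)$, Theorem \ref{holonomyalg2} shows that $\lien$ admits a partially expanding automorphism commuting with $H$ if and only if $\lien$ admits a partially expanding automorphism at all; equivalently, a non-trivial and non-negative grading can always be arranged to be preserved by $H$ once one exists. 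Finally, Theorem \ref{fieldalg2} lets me pass between the rational structure and the real one: writing $\lie = \lien^\R$, the Lie algebra $\lien$ over $\Q$ admits a partially expanding automorphism if and only if $\lie$ does. Chaining these equivalences together gives $(1) \Leftrightarrow (3)$, and combined with the first paragraph this closes the cycle.

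The step I expect to be the real obstacle is the invocation of \cite[Theorem 5.4.]{dd14-1}: one must verify that the notion of a non-trivial self-cover translates precisely into a partially expanding automorphism of $N^\Q$ respecting the holonomy. In particular, one has to check that the non-triviality of the cover, i.e. a covering of degree strictly larger than one rather than a homeomorphism, corresponds exactly to the existence of at least one eigenvalue of absolute value $> 1$, which is the defining non-triviality requirement in the definition of a partially expanding automorphism; a self-cover all of whose eigenvalues equal $1$ would be a self-homeomorphism and hence trivial. Once this dictionary is in place, the remaining arguments are the verbatim partially expanding counterparts of the proof of Theorem \ref{classexp}, with Theorems \ref{fieldalg2} and \ref{holonomyalg2} supplying the field-extension invariance and the $H$-equivariance exactly as Theorems \ref{fieldalg} and \ref{holonomyalg} do in the expanding case.
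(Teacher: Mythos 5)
Your proposal is correct and follows exactly the route the paper intends: the paper itself only remarks that Theorem \ref{classcoh} is obtained by combining \cite[Theorem 5.4.]{dd14-1} (reducing the existence of a non-trivial self-cover to a partially expanding automorphism of $N^\Q$ commuting with the holonomy), Theorem \ref{holonomyalg2} (removing the commutation constraint), and Theorem \ref{fieldalg2} (passing from $\lien$ to $\lien^\R = \lie$), together with the grading/automorphism dictionary from the introduction. Your write-up is a faithful, and somewhat more explicit, unpacking of that same argument.
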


\noindent So also the existence of a non-trivial self-cover depends only on the covering Lie group $G$.

\smallskip

Another application of Theorem \ref{holonomy} is the following result:

\begin{Thm}
\label{commut}
Let $\Gamma \backslash G$ be an infra-nilmanifold admitting an expanding map and an Anosov diffeomorphism. Then there exists an expanding map and an Anosov diffeomorphism on $\Gamma \backslash G$ which commute.
\end{Thm}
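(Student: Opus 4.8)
The plan is to pass to the Lie algebra and to produce a single expanding automorphism of $\lie$ that commutes simultaneously with the holonomy group $H$ and with the automorphism underlying the Anosov diffeomorphism. By Theorem \ref{classexp} an expanding automorphism commuting with $H$ induces an expanding map on $\Gamma\backslash G$, while commutation with the Anosov automorphism is what will force the two induced maps to commute. Concretely, write the Anosov diffeomorphism as $\bar\alpha$ with $\alpha=(d,D)\in\Aff(G)$ and $\alpha\Gamma\alpha^{-1}=\Gamma$, and let $\psi\in\Aut(\lien)$ be the differential of $D$. Reading off the holonomy component of $\alpha\Gamma\alpha^{-1}=\Gamma$ gives $\psi H\psi^{-1}=H$, so that $\psi$ normalizes the image of $H$ inside $\Aut(\lie^\C)$.

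First I would arrange that $\psi$ is semisimple: using the multiplicative Jordan decomposition in the $\Q$-group $\Aut(\lie^\C)$ one writes $\psi=\psi_s\psi_u$ with $\psi_s,\psi_u\in\Aut(\lien)$, and $\psi_s$ has the same eigenvalues as $\psi$, hence is again hyperbolic. Assuming we may replace $\psi$ by such a semisimple $\psi$, the key point is that $\psi$ now lies in a reductive subgroup together with $H$. Indeed, the normalizer of $H$ in $\Aut(\lie^\C)$ is Zariski closed, so it contains the diagonalizable group $\overline{\langle\psi\rangle}$, and therefore $M:=H\cdot\overline{\langle\psi\rangle}$ is a subgroup of $\Aut(\lie^\C)$ whose identity component is the torus $\overline{\langle\psi\rangle}^{\,0}$. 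In characteristic $0$ every element of a group with torus identity component is semisimple, so $M$ is reductive and contains both $H$ and $\psi$. Since $\Gamma\backslash G$ admits an expanding map, Theorem \ref{classexp} shows that $\lie$, and hence $\Aut(\lie^\C)$, has an expanding element. Applying Theorem \ref{holonomy} with $K=\Q$, $G=\Aut(\lie^\C)$ and the subgroup $\langle H,\psi\rangle\subseteq\Aut(\lien)$ contained in the reductive subgroup $M$, I obtain an expanding automorphism $\varphi\in\Aut(\lien)$ commuting with every element of $\langle H,\psi\rangle$, in particular with $H$ and with $\psi$.

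It remains to realize this on the manifold. Because $\varphi$ commutes with $H$, Theorem \ref{classexp} together with the construction of \cite{dd14-1} shows that $\varphi$ is the linear part of an expanding map $\bar\beta$ with $\beta=(b,\Phi)\in\Aff(G)$, $\beta\Gamma\beta^{-1}\subseteq\Gamma$ and $\Phi$ having differential $\varphi$. Since $\varphi$ and $\psi$ commute, the automorphisms $\Phi$ and $D$ of $G$ commute, and after a compatible choice of the translation parts $b$ and $d$ (as in \cite{dd14-1}) the affine transformations $\alpha$ and $\beta$ commute in $\Aff(G)$; the induced maps $\bar\alpha$ and $\bar\beta$ then commute on $\Gamma\backslash G$, giving the desired commuting Anosov diffeomorphism and expanding map.

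I expect the main obstacle to be precisely the reduction to a \emph{semisimple} Anosov automorphism on the \emph{same} manifold. Theorem \ref{holonomy} only lets us centralize a subgroup that sits inside a reductive subgroup, which forces us to discard the unipotent Jordan part $\psi_u$; one must verify that the semisimple part $\psi_s$ (or some other semisimple hyperbolic automorphism) still defines an Anosov diffeomorphism on $\Gamma\backslash G$, i.e.\ that the unipotent component can be removed without leaving the given lattice. This is the delicate, genuinely arithmetic point, since in general $\psi_s$ preserves only a lattice commensurable with $\log\Gamma$. The remaining bookkeeping of the translation parts needed to upgrade "commuting linear parts'' to genuinely commuting affine maps is comparatively routine and follows the methods already developed in \cite{dd14-1}.
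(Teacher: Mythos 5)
Your overall strategy --- place the holonomy group and a semisimple hyperbolic automorphism inside a common reductive subgroup of $\Aut(\lien^\C)$ and apply Theorem \ref{holonomy} --- matches the paper's, but you leave open exactly the two steps where the paper makes different, and essential, choices. First, the ``delicate arithmetic point'' you flag is real and you do not resolve it: the semisimple Jordan part $\psi_s$ of the linear part of the given Anosov diffeomorphism has no reason to preserve $\Gamma$ (or a lattice commensurable with it in a controlled way), so you cannot conclude that it still induces an Anosov diffeomorphism on $\Gamma\backslash G$. The paper avoids this entirely by not starting from the given Anosov diffeomorphism at all: it invokes \cite[Theorem A]{dv08-1}, which produces from scratch a hyperbolic automorphism $\varphi$ of $\lien$ with characteristic polynomial in $\Z[X]$, $\vert \det(\varphi)\vert = 1$, commuting with $H$ (not merely normalizing it), and such that some power $\varphi^k$ satisfies $\varphi^k \Gamma \varphi^{-k} = \Gamma$. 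That integrality data is what allows one to pass to the semisimple part without losing the lattice.

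Second, your claim that the expanding automorphism produced by Theorem \ref{holonomy} ``is the linear part of an expanding map $\bar\beta$'' is not justified: an arbitrary expanding automorphism commuting with $H$ need not have any power carrying $\Gamma$ into itself. The paper instead converts the situation into a positive grading of $\lien$ preserved by both $H$ and the hyperbolic $\varphi$, and then uses the specific grading automorphisms $\psi_p$ acting as $p^i$ on $\lien_i$; by the proof of \cite[Theorem 4.2]{dd14-1} there is a prime $p$ and an $l>0$ with $\psi_p^l \Gamma \psi_p^{-l} \subseteq \Gamma$, and $\psi_p$ automatically commutes with every automorphism preserving the grading, in particular with $\varphi^k$. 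This also removes your bookkeeping of translation parts: both maps are induced by genuine automorphisms of $G$, so commutativity on the manifold is immediate once the Lie algebra automorphisms commute. To repair your argument you would need to (i) replace your $\psi$ by the automorphism supplied by \cite{dv08-1}, and (ii) route the expanding map through the preserved positive grading rather than through the raw output of Theorem \ref{holonomy}.
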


\begin{proof}
If $\Gamma \backslash G$ admits an Anosov diffeomorphism, then \cite[Theorem A]{dv08-1} implies that there exists a hyperbolic automorphism $\varphi$ of $\lien$, i.e.\ with no eigenvalues of absolute value $1$, such that $\varphi$ has characteristic polynomial in $\Z[X]$,\hspace{0.2mm} $\vert \det(\varphi) \vert = 1$ and $\varphi$ commutes with every element of $H \subseteq \Aut(\lien)$. We can also assume that $\varphi$ is semisimple because of the multiplicative Jordan decomposition. Take $T$ the smallest linear algebraic subgroup of $\Aut(\lien^\C)$ which contains $\varphi$. The group $T$ will also commute with every element of $H$ and the group $H T$ forms a reductive subgroup of $\Aut(\lien^\C)$. By Theorem \ref{holonomy}, we conclude that there exists a positive grading $\lien = \bigoplus_{i > 0} \lien_i$ on $\Aut(\lien)$ which is preserved by $\varphi$ and every element of $H$. 

From the proof of \cite[Theorem A]{dv08-1}, it follows that some power $\varphi^k$ of $\varphi$ satisfies $\varphi^k \Gamma \varphi^{-k} = \Gamma$ and thus induces a hyperbolic affine infra-nilmanifold automorphism on $\Gamma \backslash G$. Let $p$ be any prime and consider the expanding automorphisms $\psi_p: \lien \to \lien$ which are given by $\psi_p(x) = p^i x$ for all $x \in \lien_i$. By the proof of \cite[Theorem 4.2.]{dd14-1} we know that there exists a prime $p$ and $l > 0$ such that $\psi_{p}^l \Gamma \psi_{p}^{-l} \subseteq \Gamma$. The expanding affine infra-nilmanifold endomorphism induced by $\psi_{p}^l$ commutes with the Anosov diffeomorphisms induced by $\varphi^k$ and this ends the proof. 
\end{proof}

\noindent Theorem \ref{commut} is also true for non-trivial self-covers on infra-nilmanifolds by replacing positive grading by non-trivial and non-negative grading in the proof.

\section{New type of example}

As another application of Theorem \ref{holonomy}, we construct a nilmanifold which admits an Anosov diffeomorphism but no non-trivial self-cover and so also no expanding map. This is the first example of a nilmanifold satisfying these properties. On the other hand, it is easy to give examples of nilmanifolds admitting an expanding map but no Anosov diffeomorphism, e.g.~the nilmanifold $H_3(\Z) \backslash H_3(\R)$.

\smallskip

First, we will construct a rational Lie algebra $\lien$ as a quotient of a free nilpotent Lie algebra $\lie$ by using the Hall basis of such a Lie algebra. Next, we give a general way of constructing automorphisms on this Lie algebra $\lien$. In this way, we give a finite group of automorphisms $H$ such that there exist no partially expanding automorphism of $\lien$ commuting with $H$. By Theorem \ref{field} and Theorem \ref{holonomy} this shows that there are no partially expanding automorphisms on this Lie algebra nor on any rational form of the Lie algebra $\lien^\R$. Finally, we use the techniques of \cite{dere13-1} to prove that the Lie algebra $\lien^\R$ has a rational form which is Anosov. So every nilmanifold corresponding to this rational form then has the desired properties.

\smallskip

\paragraph{\textbf{Hall basis:}}

Let $\lie$ be the free $6$-step nilpotent Lie algebra over $\Q$ on $4$ generators. Denote by $X_1, X_2, X_3, X_4$ a set of generators for the Lie algebra $\lie$. Consider the natural grading $\lie_1 \oplus \ldots \oplus \lie_6$ for $\lie$, where $\lie_1$ is the vector space spanned by $X_1, \ldots, X_4$. We say that the vector $a$ has degree $i$ if $a \in \lie_i$ and we denote this as $\deg(a) = i$. We will use the shorthand notation $[a,b,c]$ with $a,b,c \in \lie$ for the Lie bracket $[a,[b,c]]$ and similarly for longer brackets. If $Y$ and $Z$ are subsets of $\lie$, then we write $[Y,Z]$ for the Lie algebra generated by all elements $[y,z]$ with $y \in Y, z \in Z$. The Lie algebra $\lien$ we construct will be a double quotient of this free Lie algebra $\lie$. 

A explicit basis for the Lie algebra $\lie$ as vector space over $\Q$ is the Hall basis. The elements of this basis are constructed inductively: given the basis for $\lie_i$ with $1 \leq i \leq k$, we build the basis for $\lie_{k+1}$. We fix an order relation on the basis vectors of $\lie_1, \ldots, \lie_k$, assuming that $a < b$ if $\deg(a) < \deg(b)$.  The basis vectors for $\lie_{k+1}$ are then given by Lie brackets $[a,b] \in \lie_{k+1}$ with $\deg(a) + \deg(b) = k+1$ and $a < b$ with the extra condition that if $b = [b_1,b_2]$ then $a \geq b_1$. For more details and a proof that these vectors indeed form a basis, we refer to \cite{hall50-1}. In our case we will always assume that the order relation satisfies $X_1 < X_2 < X_3 < X_4$ on $\lie_1$, so $X_{i_1} < X_{i_2}$ if and only if $i_1 < i_2$. From now on we will fix a Hall basis for $\lie$ and an order relation on the basis vectors. 

Denote by $$\lie^\prime = [\lie,\lie] = \lie_2 \oplus \ldots \oplus \lie_6$$ the derived subalgebra of $\lie$ and take the subalgebra $\liem = [\lie^\prime, \lie^\prime] \subseteq \lie_4 \oplus \lie_5 \oplus \lie_6$ which is an ideal of $\lie$. It is well known that the elements $[X_{i_1}, \ldots, X_{i_k}]$ with $i_1 \geq \ldots \geq i_{k-1} < i_k$ of the Hall basis for $\lie$ project to a basis for the quotient $\faktor{\lie}{\liem}$. Therefore all the other elements of the Hall basis form a basis for $\liem$ as a vector space.
\begin{Lem}
\label{technical2}
Let $\beta$, $\gamma$ and $\delta$ be the Hall bases of $\lie_2$, $\lie_3$ and $\lie_4$ respectively. Then the Hall basis of $\lie_6 \cap \liem$ is given by the set $B = B_1 \cup B_2$ where 
\begin{align*}
B_1 = &\big\{ \left[b,d\right] \mid  b \in \beta, \hspace{1mm} d \in \delta, \hspace{1mm} d \neq [b_1,b_2] \text{ with } b_i \in \beta, b_1 > b \big\} \\ 
B_2 = &\big\{[c_1,c_2] \mid c_i \in \gamma, \hspace{1mm} c_1 < c_2 \big\}.\end{align*}
\end{Lem}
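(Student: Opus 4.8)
The plan is to use the quoted description of $\faktor{\lie}{\liem}$ to pin down exactly which Hall basis vectors lie in $\liem$, and then to sort the degree-$6$ Hall vectors according to the degree of the left entry of their defining bracket. First I would observe that $\liem = [\lie^\prime,\lie^\prime]$ is a homogeneous ideal for the natural grading, so the subset of the Hall basis lying in $\liem$ (namely all Hall vectors \emph{not} of the right-normed form $[X_{i_1},\ldots,X_{i_k}]$ with $i_1 \geq \ldots \geq i_{k-1} < i_k$, as quoted) meets $\lie_6$ in a basis of $\lie_6 \cap \liem$. Every Hall vector of $\lie_6$ is a bracket $[a,b]$ with $a<b$ and $\deg(a)+\deg(b)=6$, so there are three types, according to whether $(\deg a,\deg b)$ equals $(1,5)$, $(2,4)$ or $(3,3)$, and these types are disjoint by uniqueness of the Hall decomposition.

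The next step is to identify the $(1,5)$-vectors with precisely the right-normed vectors that are excluded from $\liem$. If $[X_{i_1},b]$ is a Hall vector and $b=[b_1,b_2]$, the Hall condition $X_{i_1}\geq b_1$ forces $\deg(b_1)=1$, since $\deg(b_1)\geq 2$ would give $X_{i_1}<b_1$; hence $b_1=X_{i_2}$ with $i_1\geq i_2$, and iterating this down to $\lie_2$ rewrites $[X_{i_1},b]$ as $[X_{i_1},\ldots,X_{i_6}]$ with $i_1\geq\ldots\geq i_5<i_6$. Conversely, any right-normed vector has left entry $X_{i_1}$, hence is of type $(1,5)$. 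So the $(1,5)$-vectors are exactly the excluded right-normed ones, while the $(2,4)$- and $(3,3)$-vectors, having a left entry of degree $\geq 2$, all lie in $\liem$. Thus the Hall basis of $\lie_6 \cap \liem$ consists exactly of the $(2,4)$- and $(3,3)$-vectors, and it remains to match these against $B_1$ and $B_2$.

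For the $(3,3)$-vectors $[c_1,c_2]$ with $c_1,c_2\in\gamma$ and $c_1<c_2$, I would check that the Hall condition is vacuous: writing $c_2=[d_1,d_2]$, the only degree split available in $\lie_3$ is $(1,2)$, so $d_1$ is a generator, and $\deg(c_1)=3>1=\deg(d_1)$ forces $c_1>d_1$ automatically. Hence these vectors are precisely $\{[c_1,c_2]\mid c_i\in\gamma,\ c_1<c_2\}=B_2$.

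The main work, and the expected obstacle, is the $(2,4)$-case. Here I would split the degree-$4$ factor $d\in\delta$, $d=[d_1,d_2]$, according to whether $(\deg d_1,\deg d_2)$ is $(1,3)$ or $(2,2)$. In the $(1,3)$-case $d_1$ is a generator, so the outer Hall condition $b\geq d_1$ holds automatically and $d$ is not a bracket of two elements of $\beta$; such $[b,d]$ therefore always lie in $B_1$. In the $(2,2)$-case one has $d=[b_1,b_2]$ with $b_1<b_2$ in $\beta$, and the outer Hall condition becomes exactly $b\geq b_1$, i.e.\ it fails precisely when $b_1>b$, which is exactly the configuration excluded in the definition of $B_1$. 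Matching the two cases against the definition of $B_1$ then yields that the $(2,4)$-vectors are exactly $B_1$, completing the proof. The delicate point in this last step is to invoke the uniqueness of the Hall decomposition of $d$, so that the condition ``$d=[b_1,b_2]$ with $b_i\in\beta,\ b_1>b$'' refers unambiguously to the canonical left factor of $d$.
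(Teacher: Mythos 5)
Your argument is correct and takes essentially the same route as the paper's: both partition the degree-$6$ Hall vectors by the degree split $(1,5)$, $(2,4)$ or $(3,3)$ of the defining bracket and show, by iterating the Hall condition on the left factor, that the $(1,5)$-vectors are exactly the right-normed ones excluded from $\liem$. The only difference is that you also verify explicitly that the Hall conditions identify the $(2,4)$- and $(3,3)$-vectors with $B_1$ and $B_2$, a step the paper's proof leaves implicit.
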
  

\noindent Note that if $b < b_1 < b_2$, then the Jacobi identity gives us $$[b,b_1,b_2] = [b_1,b,b_2] - [b_2,b,b_1]$$ and the last two vectors are elements of the Hall basis. This implies that every element of $[\lie_2,\lie_4]$ can be written as a linear combination of elements in $B_1$. Similarly every element of $[\lie_3,\lie_3]$ can be written as a linear combination of elements in $B_2$.

\begin{proof}
The only other possibility for basis vectors in $\lie_6 \cap \liem$ is $[X_i,e]$ with $e$ in the Hall basis for $\lie_5$. Note that $e$ is not given by the Lie bracket between two vectors of degree $2$ and $3$, since $[X_i,e]$ is in the Hall basis. So $e$ is of the form $[X_j,d]$ with $d \in \delta$. Again, the vector $d$ is not equal to the Lie bracket of two vectors of degree $2$ since $[X_j,d]$ is in the Hall basis. So $d$ is of the form $d = [X_k,c]$ with $c \in \gamma$. But this implies that $[X_i,e] = [X_i,X_j,X_k,c] \notin \liem$. 
\end{proof}

\paragraph{\textbf{Construction of the Lie algebra $\lien$:}} 

Let $\lie$ be the free nilpotent Lie algebra on $4$ generators over $\Q$ as in the previous paragraph. Every permutation $s \in S_4$ determines an automorphism $\varphi_s$ of $\lie$ which is given by the relations $\varphi_s(X_i) = X_{s(i)}$ on the generators. Consider the automorphism $\alpha \in \Aut(\lie)$ of order $4$ which is induced by the permutation $(1234) \in S_4$. 

Let $I$ be the smallest ideal of $\lie$ such that \begin{align*} [X_i,X_1,X_3], [X_i,X_2,X_4] \text{ and } [X_{i_1},X_{i_2},X_{i_3},X_{i_4}] \in I \end{align*} for all $i, i_j \in \{1,\ldots,4\}$ with the indices $i_j$ distinct. From the Jacobi identity it follows that also $$[[X_{i_1},X_{i_2}],X_{i_3},X_{i_4}] = -[X_{i_3},X_{i_4},X_{i_1},X_{i_2}] + [X_{i_4},X_{i_3},X_{i_1},X_{i_2}]\in I$$ for all distinct $i_j \in \{1,\ldots,4\}$. The ideal $I$ is a graded ideal, meaning that $I$ is the direct sum of the vector spaces $I_k = I \cap \lie_k$. Let $\tilde{\lien}$ be the quotient Lie algebra $\faktor{\lie}{I}$ and denote by $\tilde{p}: \lie \to \tilde{\lien}$ the natural projection map. The ideal $I$ is invariant under $\alpha$, i.e. $\alpha(I) = I$ and hence $\alpha$ induces a map $\tilde{\alpha}: \tilde{\lien} \to \tilde{\lien}$.

Consider the vector $ v = [[X_4,X_3,X_4],X_2,X_1,X_2] \in \lie_6$ and write $\tilde{p}(v) = w$. The vector space spanned by $\tilde{p}\left(v - [X_2,X_4]\right) = w - \tilde{p}([X_2,X_4]) \in \tilde{\lien}$ is an ideal by definition of the Lie algebra $\tilde{\lien}$. Let $J$ be the smallest ideal of $\tilde{\lien}$ containing this vector and which is invariant under $\tilde{\alpha}$. Since $\tilde{\alpha}^2(v) = - v$ the dimension of $J$ is equal to $2$. Consider the Lie algebra $\lien = \faktor{\tilde{\lien}}{J}$ with projection map $p: \lie \to \lien$. The automorphism $\tilde{\alpha}$ induces an automorphism $\bar{\alpha} \in \Aut(\lien)$ of order $4$.

We start by showing that $\tilde{p}(v) = w \neq 0$.

\begin{Lem}
\label{technical}
The vector $v$ satisfies $\tilde{p}(v) \neq 0$ for $\tilde{p}: \lie \to \tilde{\lien}$ the projection map as above.
\end{Lem}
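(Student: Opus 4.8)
The plan is to detect $\tilde p(v)$ by isolating a single multihomogeneous component. The free Lie algebra $\lie$ carries a natural $\Z^4$--grading by the multidegree in $X_1,X_2,X_3,X_4$, and both the graded ideal $I$ and the projection $\tilde p$ respect it. Since $v = [A,B]$ with $A = [X_4,X_3,X_4]$ and $B = [X_2,X_1,X_2]$, it is multihomogeneous of multidegree $(1,2,1,2)$ (one $X_1$, two $X_2$, one $X_3$, two $X_4$), so it suffices to show $v \notin I$ inside the finite dimensional component $V = \lie_{(1,2,1,2)}$. Here $A,B \in \gamma$ with $A \neq B$, hence $v \in [\lie_3,\lie_3] \subseteq \liem$; by Lemma \ref{technical2} and the remark following it, $v$ is, up to sign, exactly one Hall basis vector of $\lie_6 \cap \liem$, namely $[A,B] \in B_2$.

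To prove $v \notin I$ I would exhibit a linear functional on $V$ annihilating $I \cap V$ but not $v$, the natural candidate being the coordinate of $[A,B]$ in the Hall basis of $V$. Concretely one expresses every bracket of a generator of $I$ landing in multidegree $(1,2,1,2)$ in this basis and checks that its $[A,B]$--coordinate is zero. The multidegree bookkeeping makes the decisive generators transparent: $A$ and $B$ are the \emph{unique} elements of $\gamma$ of multidegree $(0,0,1,2)$ and $(1,2,0,0)$ respectively, whereas the degree $3$ generators $[X_i,X_1,X_3]$ have multidegree $(2,0,1,0),(1,1,1,0),(1,0,2,0),(1,0,1,1)$ and the $[X_i,X_2,X_4]$ have multidegree $(1,1,0,1),(0,2,0,1),(0,1,1,1),(0,1,0,2)$ --- none equal to $(0,0,1,2)$ or $(1,2,0,0)$. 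A bracket of such a generator $g$ with a Hall element $c\in\gamma$ reduces, by the remark after Lemma \ref{technical2}, to a combination of $B_2$--vectors each indexed by an unordered pair $\{c,g'\}$ of $\gamma$--elements with the multidegrees of $c$ and $g$; since the multidegree of $g$ is never $(0,0,1,2)$ nor $(1,2,0,0)$, none of these pairs is $\{A,B\}$, so the $[A,B]$--coordinate vanishes. Thus no degree $3$ generator of $I$ can produce $v$.

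The main obstacle is controlling the degree $4$ generators, the rainbow brackets $[X_{i_1},X_{i_2},X_{i_3},X_{i_4}]$, together with all iterated and Jacobi--rearranged brackets built from them. Those of the form $\big[[X_2,X_4],t\big]$ lie in $[\lie_2,\lie_4] = \mathrm{span}(B_1)$, which by Lemma \ref{technical2} is spanned by Hall vectors disjoint from $B_2$, so they contribute nothing to the $[A,B]$--coordinate. The delicate terms are the remaining brackets $\big[X_p,[X_q,t]\big]$ with $p,q\in\{2,4\}$: their multidegree is again $(1,2,1,2)$, yet a rainbow contains all four letters with the block $\{1,2\}$ entangled with the block $\{3,4\}$, so one must genuinely reduce these elements to the Hall basis and verify that the cleanly split vector $[A,B]$ --- in which the two blocks interact only at the outermost bracket --- never occurs. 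I expect this to be the crux. It is a finite computation: within the fixed multidegree the space $[\lie_3,\lie_3] \cap V$ is only $10$--dimensional and Lemma \ref{technical2} supplies an explicit basis, so after reducing each generator bracket modulo the Jacobi identity one reads off the $[A,B]$--coefficient directly. Combining the three cases shows that the $[A,B]$--coordinate functional annihilates $I \cap V$ while taking a nonzero value on $v$, whence $v \notin I$ and $\tilde p(v) \neq 0$.
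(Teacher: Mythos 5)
Your overall strategy coincides with the paper's up to the decisive point: reduce to the multihomogeneous component $V=\lie_{(1,2,1,2)}$, recognize $v=[A,B]$ as (up to sign) a single Hall basis vector in $B_2$, kill $[\lie_2,I_4]$ via $B_1$ and the degree~$3$ generators via multidegree. But the step you yourself identify as the crux fails: the $[A,B]$-coordinate functional does \emph{not} annihilate $I\cap V$. Take $t=[[X_3,X_4],[X_1,X_2]]\in I_4$ (a Jacobi rearrangement of rainbow generators) and consider $w=[X_4,[X_2,t]]\in I\cap V$. Two applications of the Jacobi identity give, with $b_1=[X_3,X_4]$ and $b_2=[X_1,X_2]$,
$$w=[[X_4,b_1],[X_2,b_2]]+[[X_2,b_1],[X_4,b_2]]+[[X_4,[X_2,b_1]],b_2]+[b_1,[X_4,[X_2,b_2]]].$$
The first summand is exactly $[[X_4,X_3,X_4],[X_2,X_1,X_2]]=[A,B]=v$; the last two lie in $[\lie_2,\lie_4]=\operatorname{span}(B_1)$; and the second reduces to $B_2$-vectors indexed by pairs of multidegrees $(0,1,1,1)$ and $(1,1,0,1)$, hence never $\{A,B\}$. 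So $w$ has $[A,B]$-coordinate $\pm 1$: the cleanly split vector $[A,B]$ \emph{does} occur in the reduction of an ideal element, and no functional of the kind you propose can separate $v$ from $I\cap V$.

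This is precisely why the paper's proof is more involved at this point. It observes that whenever $\pm v$ is produced by such a bracket, it comes glued to the companion term $[[X_2,X_3,X_4],X_4,X_1,X_2]$, whose Hall decomposition is $[[X_3,X_2,X_4],X_4,X_1,X_2]-[[X_4,X_2,X_3],X_4,X_1,X_2]$; the first piece lies in $[\lie_3,I_3]$ but the second does not. Working modulo $\operatorname{span}(B_1)+[\lie_3,I_3]$, the element $v$ can therefore never be obtained without a surviving multiple of $[[X_4,X_2,X_3],X_4,X_1,X_2]$ attached, which is the actual obstruction. To repair your argument you would need to track this second coordinate as well. A secondary gap: your spanning set for $I\cap V$ covers only $[\lie_3,I_3]$ and double $\lie_1$-brackets of rainbows; you still need a decomposition such as the paper's $I_6\cap\liem=[\lie_2,I_4]+[\lie_3,I_3]+[\lie_1,\lie_1,I_4\cap\liem]$ to account for the remaining iterated brackets, e.g.\ those built from three $\lie_1$-factors on a degree~$3$ generator.
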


\begin{proof}
Since $v \in \liem \cap \lie_6$, it suffices to show that $v \notin I_6 \cap \liem$. We express the generators of $I_6 \cap \liem$ in terms of the Hall basis $B = B_1 \cup B_2$ of $\lie_6 \cap \liem$ as explained in Lemma \ref{technical2}. Note that $v$ or $-v$ is an element of $B_2$.

From the Jacobi identity, it follows that the vector space $I_6 \cap \liem$ satisfies $$I_6 \cap \liem =  [\lie_2,I_4] + [\lie_3,I_3] +[\lie_1,\lie_1,I_4 \cap \liem].$$ The elements of $[\lie_2,I_4] $ are linear combinations of elements in $B_1$. Let $\gamma$ be the Hall basis for $\lie_3$, then the vector space $[\lie_3,I_3]$ is spanned by vectors of the form $[c,[X_i,X_1,X_3]]$ and $[c,[X_i,X_2,X_4]]$ for $c \in \gamma$ and $1 \leq i \leq 4$. These vectors can easily be expressed in terms of the elements of $B_2$. This already implies that $v \notin [\lie_3,I_3]$.

To describe the generators of $ [\lie_1,\lie_1,I_4 \cap \liem]$ in the Hall basis, note that $$[X_j,b_1,b_2] =[b_1,X_j,b_2] - [b_2,X_j,b_1] $$ and thus $$[X_i,X_j,b_1,b_2] = [b_1,X_i,X_j,b_2] + [[X_i,b_1],[X_j,b_2]]  - [[X_i,b_2],[X_j,b_1]] - [b_2,X_i,X_j,b_1]$$ for all $[b_1,b_2] \in I_4$. By expressing the vectors $[b_1,X_i,X_j,b_2]$ and $[b_2,X_i,X_j,b_1]$ in the Hall basis, we only get elements of $B_1$ by using the remark under Lemma \ref{technical2}. On the other hand, the vector $[[X_i,b_1],[X_j,b_2]]  - [[X_i,b_2],[X_j,b_1]]$ is expressed only in terms of vectors in $X_2$. The only way of getting the vector $\pm v$ in this expression is in the case where $i=4, j=2, b_1 = \pm [X_3,X_4]$ and $b_2 = \pm [X_1,X_2]$ or in the situation with $i,j$ and $b_1, b_2$ interchanged. In both cases, the other vector is up to sign equal to $$[[X_2,X_3,X_4],X_4,X_1,X_2] = [[X_3,X_2,X_4],X_4,X_1,X_2] - [[X_4,X_2,X_3],X_4,X_1,X_2],$$ where these last two vectors are up to sign in $X_2$. The first vector lies in $[\lie_3,I_3]$, but the second vector $[[X_4,X_2,X_3],X_4,X_1,X_2]$ is not an element of $[\lie_3,I_3]$. This last statement can be checked by using the expression of the generators of $[\lie_3,I_3]$ in terms of the basis $B_2$. We conclude that $v \notin I$. 
\end{proof}

From Lemma \ref{technical} it follows that $p(v) \neq 0$. 

\begin{Lem}
\label{veelwerk}
Let $v = [[X_4,X_3,X_4],X_2,X_1,X_2] \in \lie$ and $\lien$ the Lie algebra as above with projection map $p: \lie \to \lien$. Then $p(v) \neq 0$.
\end{Lem}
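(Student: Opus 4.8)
The plan is to reduce the statement to the non-vanishing already established in Lemma \ref{technical}. Writing $\pi \colon \tilde{\lien} \to \lien = \faktor{\tilde{\lien}}{J}$ for the quotient map, we have $p = \pi \circ \tilde{p}$, so that $p(v) = \pi(\tilde{p}(v)) = \pi(w)$. Hence $p(v) \neq 0$ is equivalent to $w \notin J$, and everything comes down to describing $J$ explicitly and then checking that the top-degree vector $w$ does not lie in it.

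First I would pin down $J$. By construction $J$ is the smallest $\tilde{\alpha}$-invariant ideal of $\tilde{\lien}$ containing $u := w - \tilde{p}([X_2,X_4])$. The key observation is that the defining relations of $I$ make the degree-$2$ tail of $u$ central: since $[X_i,X_2,X_4] \in I$ for all $i$, the image $\tilde{p}([X_2,X_4])$ commutes with every generator $\tilde{p}(X_i)$ and hence, by induction on the degree via the Jacobi identity, with all of $\tilde{\lien}$. The degree-$6$ part $w$ is central as well, being of top degree in the $6$-step nilpotent algebra $\tilde{\lien}$. Thus $u$ is central, and the same argument applied to $\tilde{\alpha}(u) = \tilde{p}(\alpha(v)) + \tilde{p}([X_1,X_3])$ — using the relations $[X_i,X_1,X_3] \in I$ — shows $\tilde{\alpha}(u)$ is central too. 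Consequently forming the ideal adds nothing beyond the $\tilde{\alpha}$-translates, and since $\tilde{\alpha}^2(v) = -v$ gives $\tilde{\alpha}^2(u) = -u$, I obtain $J = \Q\,u + \Q\,\tilde{\alpha}(u)$, in agreement with the stated $\dim J = 2$.

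Now I would exploit the grading. Because $I$ is generated by elements of degree $\geq 3$, it is graded with $I_2 = 0$, so $\tilde{\lien}$ is graded and its degree-$2$ component is just $\lie_2$; in particular the Hall basis vectors $[X_1,X_3]$ and $[X_2,X_4]$ remain linearly independent in $\tilde{\lien}$. Suppose, for contradiction, that $w = a\,u + b\,\tilde{\alpha}(u)$ for some $a,b \in \Q$. Projecting this identity onto the degree-$2$ component gives $0 = -a\,\tilde{p}([X_2,X_4]) + b\,\tilde{p}([X_1,X_3])$, and the linear independence just noted forces $a = b = 0$. But then $w = 0$, contradicting Lemma \ref{technical}. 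Therefore $w \notin J$ and $p(v) \neq 0$.

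The only delicate point is the explicit identification of $J$: one must be certain that closing $u$ under the ideal operation produces no new vectors, which is exactly what the centrality of the two degree-$2$ tails guarantees. Once $J = \Q\,u + \Q\,\tilde{\alpha}(u)$ is in hand, the grading argument is immediate, since $w$ is homogeneous of degree $6$ while the two generators of $J$ carry linearly independent, nonzero degree-$2$ components; no combination of them can equal a purely degree-$6$ vector unless it is trivial.
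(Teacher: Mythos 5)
Your proof is correct and follows essentially the same route as the paper: both arguments reduce to the facts that $J$ is spanned by $u$ and $\tilde{\alpha}(u)$, that their degree-$2$ components $-\tilde{p}([X_2,X_4])$ and $\tilde{p}([X_1,X_3])$ are linearly independent because $I_2=0$, and that $\tilde{p}(v)\neq 0$ by Lemma \ref{technical}. The only difference is presentational: the paper phrases the conclusion as a dimension count ($J$ would otherwise contain three linearly independent vectors while $\dim J = 2$), whereas you identify $J$ explicitly via the centrality of $u$ and then project onto the degree-$2$ component, which has the small added benefit of verifying the assertion $\dim J = 2$ made in the construction.
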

\begin{proof}
The vector $\tilde{p}(v) \neq 0$ by Lemma \ref{technical}. Therefore the vectors $\tilde{p}(v), \tilde{p}([X_1,X_3])$ and $\tilde{p}([X_2,X_4])$ are linearly independent in $\tilde{\lien}$ since $I$ is a graded ideal and $I_2 = I \cap \lie_2 = 0$. Assume that $p(v) = 0$, then the ideal $J$ contains the vectors $\tilde{p}(v), \tilde{p}([X_1,X_3])$ and $\tilde{p}([X_2,X_4])$. This is impossible since the dimension of $J$ is equal to $2$. 
\end{proof}

\paragraph{\textbf{Automorphisms on $\lien$:}} 

By the explicit construction of the Lie algebra $\lien$ as a quotient of the free Lie algebra $\lie$ we can give a general way of constructing automorphisms on $\lien^E$ for any field extension $E \supseteq \Q$. Consider the linear subspace $\lie_1^E$ of $\lie^E$ spanned by $X_1, \ldots, X_4$. Take $\lambda_1, \ldots, \lambda_4 \in E$ such that $\lambda_1 \lambda_2 \lambda_3 \lambda_4 = 1$ and consider the linear map $\lie_1^E \to \lie_1^E$ given by $$X_i \mapsto \lambda_i X_i.$$ This map uniquely extends to an automorphism $\varphi: \lie^E \to \lie^E$ and it is easy to see that it also induces an automorphism $\bar{\varphi}$ on the Lie algebra $\lien^E$. 

Take the basis $\bar{X}_1, \ldots, \bar{X}_4$ for the vector space $\faktor{\lien^E}{[\lien^E,\lien^E]}$. Under the natural projection map $\pi: \Aut(\lien^E) \to \Aut \left(\faktor{\lien^E}{[\lien^E,\lien^E]}\right) \cong \GL(4,E)$, the automorphism $\bar{\varphi}$ is mapped to the diagonal matrix with eigenvalues $\lambda_1, \lambda_2, \lambda_3$ and $\lambda_4$. This will be an important construction of automorphisms on the Lie algebra $\lien^E$.

\smallskip

As a consequence of this construction for automorphisms, we have the following proposition:

\begin{Prop}
\label{proppartexp}
The Lie algebra $\lien$ has no partially expanding automorphisms.
\end{Prop}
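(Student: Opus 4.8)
The plan is to show that any partially expanding automorphism would have to act as the identity on the abelianization $V = \faktor{\lien^\C}{[\lien^\C,\lien^\C]}$, and would therefore be trivial, which is absurd. By Theorem \ref{fieldalg2} it suffices to exclude a partially expanding automorphism of $\lien^\C$, and by the multiplicative Jordan decomposition I may take such a $\psi$ to be semisimple. Writing $\mu_1,\mu_3$ and $\mu_2,\mu_4$ for the eigenvalues of the induced map $\bar\psi$ on $V$, each $\mu_i$ is an eigenvalue of $\psi$ and hence satisfies $\mu_i = 1$ or $\vert\mu_i\vert > 1$. The whole argument reduces to proving $\mu_1\mu_2\mu_3\mu_4 = 1$: together with $\vert\mu_i\vert\ge 1$ this forces $\vert\mu_i\vert = 1$, hence $\mu_i = 1$ for every $i$, so $\bar\psi$ is the identity; then every eigenvalue of $\psi$ is a product of the $\mu_i$ and equals $1$, so $\psi$ is unipotent and, being semisimple, trivial, contradicting partial expansion.

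First I would extract that $\bar\psi$ permutes the two planes $P_1 = \langle X_1, X_3\rangle$ and $P_2 = \langle X_2, X_4\rangle$. Writing $\gamma_k$ for the terms of the lower central series, the bracket gives a $\bar\psi$\-equivariant map $\Lambda^2 V \to \faktor{\gamma_2}{\gamma_3}$. By construction of $\lien$ the central vectors $p([X_1,X_3])=-p(\bar\alpha v)$ and $p([X_2,X_4])=p(v)$ lie in $\gamma_6$, so they vanish in $\faktor{\gamma_2}{\gamma_3}$, while the four remaining brackets survive; a dimension count then shows the kernel is exactly $\langle X_1\wedge X_3, X_2\wedge X_4\rangle$. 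Its only decomposable vectors are the multiples of $X_1\wedge X_3$ and of $X_2\wedge X_4$, so $\Lambda^2\bar\psi$ sends $\langle X_1\wedge X_3\rangle$ to one of these lines, i.e.\ $\bar\psi$ permutes $\{P_1,P_2\}$. Replacing $\psi$ by $\psi^2$ (still partially expanding) I may assume it preserves each plane.

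The key input is then the non\-homogeneous identity $p(v) = p([X_2,X_4])$ defining $\lien$, together with its $\bar\alpha$\-image $p(\bar\alpha v) = -p([X_1,X_3])$. Passing to the diagonalizable group generated by $\psi$ produces a weight grading of $\lien^\C$; since $P_2$ is $\psi$\-stable and $[X_2,X_4]$ spans $\Lambda^2 P_2$, the vector $p([X_2,X_4])$, which is nonzero by Lemma \ref{veelwerk}, is a $\psi$\-eigenvector with eigenvalue $\det(\bar\psi\rst{P_2}) = \mu_2\mu_4$. Because $p(v) = p([X_2,X_4])$, this same eigenvalue must also be read off from $v$, which has multidegree $(1,2,1,2)$ in $X_1,\ldots,X_4$, i.e.\ $P_1$\-degree $2$ and $P_2$\-degree $4$; in the diagonal situation its eigenvalue is $\mu_1\mu_2^2\mu_3\mu_4^2$, and equating the two expressions yields $\mu_1\mu_2\mu_3\mu_4 = 1$, as required. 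The symmetric computation with $\bar\alpha v$ and $[X_1,X_3]$ gives the same relation.

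The hard part will be this last step when $\bar\psi$ is not diagonal on the individual planes: then $v$ expands as a sum of bracket monomials of several different weights, all of $P_1$\-degree $2$ and $P_2$\-degree $4$, and to identify the eigenvalue of $p(v)$ with $\mu_1\mu_2^2\mu_3\mu_4^2$ rather than with some other product of six eigenvalues one must know precisely which of these monomials survive in $\lien$. This is exactly where I would invoke the explicit Hall basis of $\lie_6\cap\liem$ from Lemma \ref{technical2} and the surviving\-monomial analysis of Lemma \ref{technical}. To make the bookkeeping manageable I would first apply Theorem \ref{holonomyalg2} with the finite group $H=\langle\bar\alpha\rangle$, so that from the start $\psi$ commutes with $\bar\alpha$; as $\bar\alpha$ acts on $V$ as a four\-cycle with distinct eigenvalues, $\psi$ is then automatically diagonal in the corresponding eigenbasis, and combined with the plane\-permutation this collapses the eigenvalue list to two values of multiplicity two, reducing the weight matching to a short case analysis in which the structural lemmas single out the relevant monomial and force $\mu_1\mu_2\mu_3\mu_4=1$.
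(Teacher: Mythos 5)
Your overall strategy is the same as the paper's: everything hinges on extracting the relation $\mu_1\mu_2\mu_3\mu_4=1$ from the non-homogeneous identity $p(v)=p([X_2,X_4])$, your endgame (product of eigenvalues equal to $1$ together with $\vert\mu_i\vert\ge 1$ forces $\psi=\mathrm{id}$) is fine, and the $\Lambda^2$-decomposability argument showing that $\bar\psi$ permutes the planes $P_1,P_2$ is correct and a nice independent observation. The gap is exactly where you announce ``the hard part'', and your proposed fix does not close it. Applying Theorem \ref{holonomyalg2} with $H=\langle\bar\alpha\rangle$ only makes $\bar\psi$ diagonal in the eigenbasis of the $4$-cycle, i.e.\ a circulant matrix in the basis $\bar X_1,\dots,\bar X_4$; combined with plane preservation this gives $\bar\psi$ the shape $X_1\mapsto aX_1+cX_3$, $X_2\mapsto aX_2+cX_4$, \dots, with eigenvectors $X_1\pm X_3$, $X_2\pm X_4$ and eigenvalues $s=a+c$, $t=a-c$, each of multiplicity two. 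In that basis the monomial $v$, of $P_1$-degree $2$ and $P_2$-degree $4$, expands into $2^6$ bracket monomials whose weights range over $s^kt^{6-k}$ for $0\le k\le 6$, so $p(v)$ is a priori a sum of eigenvectors with up to seven distinct eigenvalues. Identifying its eigenvalue with $s^3t^3=\mu_1\mu_2^2\mu_3\mu_4^2$, so as to equate it with the eigenvalue $st$ of $p([X_2,X_4])$, would require determining which of those $64$ monomials survive in $\lien$ --- a computation far beyond Lemmas \ref{technical2} and \ref{technical}, which only treat the single monomial $v$. That deferred ``short case analysis'' is the entire content of the proof and is not carried out.

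The paper sidesteps this by a different choice of finite group: $H\cong\Z_2\oplus\Z_2$ generated by the sign matrices $\mathrm{diag}(1,1,-1,-1)$ and $\mathrm{diag}(1,-1,1,-1)$, acting on $\lien$ through the diagonal-automorphism construction. The centralizer of this $H$ in $\GL(4,\Q)$ is precisely the diagonal torus, so after invoking Theorem \ref{holonomy} the induced map on the abelianization is diagonal in the basis $\bar X_1,\dots,\bar X_4$ itself; then $p(v)$ has weight $\lambda_1\lambda_2^2\lambda_3\lambda_4^2$ (exactly, since any correction term lands in a bracket of length $7$, which vanishes) while $p([X_2,X_4])$ has weight $\lambda_2\lambda_4$, and $\lambda_1\lambda_2\lambda_3\lambda_4=1$ falls out in one line. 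If you replace $\langle\bar\alpha\rangle$ by a finite group whose centralizer is the diagonal torus, your argument goes through and the plane-permutation step becomes unnecessary.
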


\begin{proof}

Let $H$ be the subgroup of $\GL(4,\Q)$ generated by the matrices $$\begin{pmatrix}1 & 0 & 0 & 0 \\0 & 1 & 0 & 0 \\ 0 & 0 & -1 & 0 \\0 & 0 & 0 & -1\end{pmatrix} \text{ and } \begin{pmatrix}1 & 0 & 0 & 0 \\0 & -1 & 0 & 0 \\ 0 & 0 & 1 & 0 \\0 & 0 & 0 & -1\end{pmatrix}.$$ This subgroup is isomorphic to $\Z_2 \oplus \Z_2$ and the centralizer of $H$ in $\GL(4,\Q)$ is given by the diagonal matrices $D(n,\Q)$. As described just above this theorem, each of the generators of $H$ above induces an automorphism of the Lie algebra $\lien$ and thus we get a faithful representation $i: H \to \Aut(\lien)$. 

Assume that $\lien$ does have a partially expanding automorphism $\varphi$. By Theorem \ref{holonomy}, we can assume that $\varphi$ commutes with every element of the finite group $i(H)$. Consider the vector space $\faktor{\lien}{[\lien,\lien]}$ with basis $\bar{X}_1, \ldots, \bar{X}_4$ and the natural projection map $\pi: \Aut(\lien) \to \Aut \left(\faktor{\lien}{[\lien,\lien]}\right) \cong \GL(4,\Q)$. Since $\pi(\varphi)$ lies in the centralizer of $H$, we know that $$\pi(\varphi) = \begin{pmatrix} \lambda_1 & 0 & 0 & 0 \\0 & \lambda_2 & 0 & 0 \\ 0 & 0 & \lambda_3 & 0\\0 & 0 & 0 &  \lambda_4\end{pmatrix}.$$ The vector $p(v)$, with $v$ as in the definition of $\lien$ above, is then an eigenvector of $\varphi$ with eigenvalue $\lambda_1 \lambda_2^2 \lambda_3 \lambda_4^2$ and $p([X_2,X_4])$ is an eigenvector of eigenvalue $\lambda_2 \lambda_4$. Since $p(v) = p([X_2,X_4]) \neq 0$ by Lemma \ref{veelwerk}, it must hold that $\lambda_1 \lambda_2 \lambda_3 \lambda_4 = 1$. This is a contradiction since $\varphi$ is a partially expanding automorphism.
\end{proof}

\paragraph{\textbf{Anosov Lie algebra:}} A rational form of a real Lie algebra $\lien$ is a rational subalgebra $\liem \subseteq \lien$ such that $\liem^\R = \R \otimes \liem = \lien$. The paper \cite{dere13-1} gives a general way of constructing rational forms of real Lie algebras which are Anosov. In particular, we will use \cite[Corollary 2.7]{dere13-1} to show that the real Lie algebra $\lien$ has a rational form which is Anosov:




\begin{Prop}
The Lie algebra $\lien^\R$ has a rational form which is Anosov.
\end{Prop}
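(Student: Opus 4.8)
The plan is to apply \cite[Corollary 2.7]{dere13-1}, which produces an Anosov rational form of a real Lie algebra out of a semisimple automorphism whose eigenvalues are algebraic units of absolute value $\neq 1$ and whose eigenvalue data is stable under the Galois action; so the whole argument reduces to exhibiting such an automorphism of $\lien^\R$. The automorphisms constructed in the paragraph ``Automorphisms on $\lien$'' are exactly the right family: for any $\lambda_1,\ldots,\lambda_4$ with $\lambda_1\lambda_2\lambda_3\lambda_4 = 1$ we obtain a semisimple $\bar\varphi \in \Aut(\lien^E)$ which is diagonal on the generators, $X_i \mapsto \lambda_i X_i$, and hence acts on a basis element of multidegree $(a_1,a_2,a_3,a_4)$ by the monomial $\lambda_1^{a_1}\lambda_2^{a_2}\lambda_3^{a_3}\lambda_4^{a_4}$. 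The constraint $\prod_i \lambda_i = 1$ is precisely what makes $\bar\varphi$ descend through the ideal $J$: the eigenvalue on $p(v)$ is $\lambda_1\lambda_2^2\lambda_3\lambda_4^2$ while that on $p([X_2,X_4])$ is $\lambda_2\lambda_4$, and these agree exactly when $\prod_i\lambda_i = 1$, which is consistent with the identification $p(v) = p([X_2,X_4]) \neq 0$ from Lemma \ref{veelwerk}.

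First I would choose the eigenvalues as a full set of Galois conjugates of a hyperbolic unit. Concretely, take a totally real cyclic quartic field $K = \Q(\mu)$ with $\Gal(K/\Q) = \langle \tau \rangle \cong \Z/4\Z$, where $\mu \in \mathcal{O}_K$ is a unit with $N_{K/\Q}(\mu) = 1$ (replacing $\mu$ by $\mu^2$ if necessary), and label the conjugates $\lambda_i = \tau^{\,i-1}(\mu)$ so that $\tau$ permutes them cyclically as the $4$-cycle $(1234)$. Fixing a real embedding $K \hookrightarrow \R$ turns $\bar\varphi$ into an automorphism of the real Lie algebra $\lien^\R$, and $\prod_i \lambda_i = N_{K/\Q}(\mu) = 1$, so it is well defined.

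Second, I would check that $\bar\varphi$ is hyperbolic on $\lien^\R$, i.e.\ that every eigenvalue has absolute value $\neq 1$. An eigenvalue $\prod_i \lambda_i^{a_i}$ has absolute value $1$ exactly when $\sum_i a_i \log|\lambda_i| = 0$. By Dirichlet's unit theorem the logarithmic embedding has rank $3$, and for a suitably generic $\mu$ the numbers $\log|\lambda_1|,\ldots,\log|\lambda_4|$ admit no $\Q$-linear relation beyond $\sum_i \log|\lambda_i| = 0$. For such a choice, $\sum_i a_i\log|\lambda_i| = 0$ forces the multidegree $(a_1,a_2,a_3,a_4)$ to be proportional to $(1,1,1,1)$; since the total degree is at most $6$, the only possibility is the multilinear multidegree $(1,1,1,1)$ in degree $4$. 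But every degree-$4$ multilinear bracket (those using each generator exactly once) is a combination of left-normed brackets $[X_{i_1},X_{i_2},X_{i_3},X_{i_4}]$ with distinct indices, and all of these lie in $I$ by construction; hence no such element survives in $\lien$. Therefore every eigenvalue of $\bar\varphi$ on $\lien^\R$ has absolute value $\neq 1$.

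Finally, the Galois invariance required by \cite[Corollary 2.7]{dere13-1} is guaranteed by the symmetry built into the construction: the generator $\tau$ acts on the eigenvalues by the $4$-cycle $(1234)$, which is exactly the permutation inducing the automorphism $\alpha$ used to build $\lien$. Since $\alpha(I) = I$ and $J$ was defined to be $\tilde\alpha$-invariant, both defining ideals are stable under this action, so $\bar\varphi$ is Galois-conjugate, via the $\Q$-rational automorphism $\alpha$, to each of its conjugates. Passing to the restriction of scalars along $\mathcal{O}_K \cong \Z^4$, multiplication by $\mu$ becomes an integral matrix that is conjugate over $\R$ to $\mathrm{diag}(\lambda_1,\ldots,\lambda_4)$, and the $\tau$-stability of $I$ and $J$ makes the structure constants rational in this basis. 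Thus \cite[Corollary 2.7]{dere13-1} yields a rational form $\liem$ of $\lien^\R$ carrying an integral hyperbolic automorphism, i.e.\ $\liem$ is Anosov. I expect the main obstacle to be the arithmetic bookkeeping of the second step: producing an explicit totally real cyclic quartic unit that is hyperbolic, has norm $1$, and whose logarithmic data carries no relation beyond $\sum_i\log|\lambda_i| = 0$, together with the verification that the unique absolute-value-$1$ multidegree is exactly the one annihilated by $I$.
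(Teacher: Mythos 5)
Your proposal is correct and follows essentially the same route as the paper: eigenvalues given by the Galois conjugates of a quartic unit in a cyclic totally real field, hyperbolicity from the fact that the only multidegree of absolute value $1$ is the multilinear one $(1,1,1,1)$ and that all such brackets lie in $I$, and Galois equivariance via $\bar{\alpha}$ feeding into \cite[Corollary 2.7]{dere13-1}. The only difference is in the step you flag as the ``main obstacle'': where you ask for a suitably generic unit with no logarithmic relations beyond $\sum_i \log|\lambda_i|=0$, the paper simply takes $\mu$ to be a unit Pisot number, which satisfies the full rank condition of \cite{dere13-1} and settles that point concretely.
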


\begin{proof}
Take $\Q \subseteq E \subseteq \R$ a field extension with Galois group $\Gal(E,\Q) \cong \Z_4$ and denote by $\sigma$ a generator of $\Gal(E,\Q)$. Let $\mu$ be a unit Pisot number in $E$ and write $\mu_i = \sigma^{i-1}(\mu)$. By squaring $\mu$ if necessary, we can assume that $\mu_1 \mu_2 \mu_3 \mu_4=1.$ Take $\varphi: \lien \to \lien$ the automorphism induced by the linear map that maps $$X_i \to \mu_i X_i,$$ as explained above Proposition \ref{proppartexp}.

All eigenvalues of the automorphism $\varphi$ are products of the algebraic units $\mu_i$ of length at most $6$. Note that $\mu$ satisfies the full rank condition (see \cite{dere13-1}), meaning that if  $$\prod_{j=1}^4 \mu_j^{d_j} = \pm 1,$$ it holds that $d_1 = d_2 = d_3 = d_4$. Therefore, the only possibility to get an eigenvalue of absolute value $1$ is $\mu_1 \mu_2 \mu_3 \mu_4$. By construction of the Lie algebra $\tilde{\lien}$, all the eigenvectors with eigenvalue $\mu_1 \mu_2 \mu_3 \mu_4$ lie in $I$, so this eigenvalue does not occur. Hence, $\varphi$ has no eigenvalues of absolute value $1$. 

Consider the representation $\rho: \Gal(E,\Q) \to \Aut(\lien)$ given by $\rho(\sigma) = \bar{\alpha}$. The maps $\rho$ and $\varphi$ satisfy the conditions of \cite[Corollary 2.7]{dere13-1}, where the subspaces $V_\lambda$ are the eigenspaces of the map $\varphi$ and the map $f$ of the theorem is equal to $\varphi$. This implies that $\lien^E$ (and therefore also $\lien^\R$) has a rational form which is Anosov. 
\end{proof}

\bibliography{ref}

\begin{thebibliography}{10}

\bibitem{bore91-1}
Armand Borel.
\newblock {\em Linear algebraic groups}, volume 126 of {\em Graduate Texts in
  Mathematics}.
\newblock Springer-Verlag, second edition, 1991.

\bibitem{corn14-1}
Yves Cornulier.
\newblock Gradings on lie algebras, systolic growth, and cohopfian properties
  of nilpotent groups.
\newblock {\em Preprint}, 2014.

\bibitem{deki96-1}
Karel Dekimpe.
\newblock {\em {A}lmost-{B}ieberbach {G}roups: {A}ffine and Polynomial
  Structures}, volume 1639 of {\em Lect. Notes in Math.}
\newblock Springer--Verlag, 1996.

\bibitem{deki99-1}
Karel Dekimpe.
\newblock Hyperbolic automorphisms and {A}nosov diffeomorphisms on
  nilmanifolds.
\newblock {\em Trans. Amer. Math. Soc.}, 353(7):pp.\ 2859--2877, 2001.

\bibitem{dd14-1}
Karel Dekimpe and Jonas Der\'{e}.
\newblock Expanding maps and non-trivial self-covers on infra-nilmanifolds.
\newblock {\em {T}opological {M}ethods in {N}onlinear {A}nalysis}, 2015.

\bibitem{dv08-1}
Karel Dekimpe and Kelly Verheyen.
\newblock Anosov diffeomorphisms on infra-nilmanifolds modeled on a free 2-step
  nilpotent lie group.
\newblock {\em Groups, Geometry and Dynamics}, 3(4):pp. 555--578, 2009.

\bibitem{dere13-1}
Jonas Der\'{e}.
\newblock A new method for constructing {A}nosov {L}ie algebras.
\newblock {\em Trans. Amer. Math. Soc.}, 368:1497--1516, 2016.

\bibitem{grom81-1}
Mikhael Gromov.
\newblock Groups of polynomial growth and expanding maps.
\newblock {\em Inst. Hautes \'Etudes Sci. Publ. Math.}, (53):53--73, 1981.

\bibitem{hall50-1}
Marshall Hall, Jr.
\newblock A basis for free {L}ie rings and higher commutators in free groups.
\newblock {\em Proc. Amer. Math. Soc.}, 1:575--581, 1950.

\bibitem{hoch81-1}
Gerhard~P. Hochschild.
\newblock {\em Basic theory of algebraic groups and {L}ie algebras}, volume~75
  of {\em Graduate Texts in Mathematics}.
\newblock Springer-Verlag, New York-Berlin, 1981.

\bibitem{hump81-1}
James~E. Humphreys.
\newblock {\em Linear Algebraic Groups}.
\newblock Graduate Texts in Mathematics. Springer-Verlag, New York, 1981.

\bibitem{malf97-3}
Wim Malfait.
\newblock Anosov diffeomorphisms on nilmanifolds of dimension at most six.
\newblock {\em Geom. Dedicata}, 79(3):291--298, 2000.

\bibitem{sega83-1}
Daniel Segal.
\newblock {\em Polycyclic {G}roups}.
\newblock Cambridge University Press, 1983.

\bibitem{smal67-1}
S.~Smale.
\newblock Differentiable dynamical systems.
\newblock {\em Bull. Amer. Math. Soc.}, 73,:pp. 747--817, 1967.

\end{thebibliography}
\bibliographystyle{plain}

\end{document}